\documentclass[a4paper,12pt]{amsart}

\usepackage{latexsym,amsthm,amsmath,amssymb}
\usepackage{color,ulem}
\usepackage{arydshln}
\usepackage{epic,eepic,eepicemu}
\usepackage{multicol}
\usepackage{colortbl}
\usepackage{graphicx}
\usepackage{pgfplots}
\pgfplotsset{compat=newest}

\usepackage{hyperref} 



\renewcommand{\em}{\it}

\setlength{\topmargin}{0cm}
\setlength{\headheight}{0cm}
\setlength{\headsep}{.5cm}
\setlength{\textheight}{\paperheight}
\addtolength{\textheight}{-6.5cm}
\setlength{\textwidth}{\paperwidth}
\addtolength{\textwidth}{-6cm}
\setlength{\oddsidemargin}{.5cm}
\setlength{\evensidemargin}{.5cm}

\newtheorem{thm}{Theorem}[section]
\newtheorem{prop}[thm]{Proposition}
\newtheorem{lemma2}[thm]{Lemma}
\newtheorem{cor}[thm]{Corollary}

\theoremstyle{definition}

\newtheorem{exmpl}[thm]{Example}
\newtheorem{remark}[thm]{Remark}

\newcommand{\cL}{\mathcal{L}}

\newcommand{\fa}{\mathfrak{a}}
\newcommand{\fm}{\mathfrak{m}}

\newcommand{\B}{\mathbb{B}}
\newcommand{\R}{\mathbb{R}}
\newcommand{\N}{\mathbb{N}}

\renewcommand{\phi}{\varphi}
\renewcommand{\epsilon}{\varepsilon}

\newcommand{\bx}{\bar{x}}
\newcommand{\by}{\bar{y}}

\newcommand{\bs}{\bar{s}}
\newcommand{\bu}{\bar{u}}

\newcommand{\bA}{\overline{A}}

\newcommand{\tf}{\tilde{f}}

\renewcommand{\tt}{\tilde{t}}

\newcommand{\Span}{\mathop\mathrm{Span}}

\newcommand{\supp}{\mathop\mathrm{supp}}

\title[Exact Convergence Rates of Alternating Projections]{Exact Convergence Rates of Alternating Projections for Nontransversal Intersections}

\author[H. Ochiai]{Hiroyuki Ochiai}
\address[H. Ochiai]{Institute of Mathematics for Industry, Kyushu University, 744 Motooka, Nishi-ku, Fukuoka 819-0395, Japan}
\email{\texttt{ochiai@imi.kyushu-u.ac.jp}}
\author[Y. Sekiguchi]{Yoshiyuki Sekiguchi}
\address[Y. Sekiguchi]{Graduate School of Marine Science and Technology, Etchujima 2-1-8, Koto-ku, Tokyo 135-8533, Japan}
\email{\texttt{yoshi-s@kaiyodai.ac.jp}}
\author[H. Waki]{Hayato Waki}
\address[H. Waki]{Institute of Mathematics for Industry, Kyushu University, 744 Motooka, Nishi-ku, Fukuoka 819-0395, JAPAN}
\email{waki@imi.kyushu-u.ac.jp}

\subjclass[2010]{Primary 41A25, 90C25; Secondary 65K10}

\keywords{alternating projection method, exact convergence rate,
basic semialgebraic convex set, nontransversal intersection,
multiplicity, \L{}ojasiewicz exponent}

\begin{document}

\maketitle


 \begin{abstract}
 We consider the convergence rate of the alternating projection method
 for the nontransversal intersection of a semialgebraic set and a linear
 subspace.
 For such an intersection, the convergence rate is known as sublinear in the worst case.
 We study the exact convergence rate for a given 
  semialgebraic set and an initial point, and investigate when the convergence rate is linear or sublinear.
 As a consequence, we show that the exact rates are expressed by
 multiplicities of
 the defining polynomials of the semialgebraic set, or related power series
 in the case that the linear subspace is a line,
 and we also decide the convergence rate for given data by using elimination theory.
 Our methods are also applied to give upper bounds for the case that the
 linear subspace has the dimension more than one.
 The upper bounds are shown to be tight by obtaining exact convergence
 rates for a specific semialgebraic set, which depend on the initial points.    
 \end{abstract}

\section{Introduction}
Convergence rates of iterative methods for optimization problems are
typically estimated in the worst case among optimization problems of a
specific type
and for any initial points.
In a practical application, such an estimate gives us useful information
for choosing an appropriate iterative method for the working problem.
However, the behavior of iterative methods certainly depends on
the input functions and the initial point, and the behavior sometimes changes
dramatically.

We are interested in the behavior of the alternating projection method
that strongly depends on given data and the initial point. 
The alternating projection method is an algorithm for finding
a point in the intersection of two sets, by iteratively projecting
points to each of the two sets.
The method has a variety of applications, such as  image recovery \cite{C1996}, \cite{BB1996},
phase retrieval \cite{BCL2002}, control theory \cite{GS1996} and factorization of completely positive matrices \cite{GD2020}.
In general, if two sets are semialgebraic, \cite{BLY2014} showed that the sequence constructed
by the alternating projections converges to a point in the intersection
without any regularity conditions.
If two closed convex sets intersect transversely, then
the convergence rate is linear \cite{GPR1967}, and
the behavior of alternating projections is well-known; see, e.g. \cite{LM2008}.
However, if the intersection is nontransversal,
then the convergence rate is sublinear, and the known upper bounds
on the convergence rate are far from being tight as discussed in
\cite[Remark 4.5]{BLY2014}. 
The convergence rate of alternating projections for a nontransversal intersection is also studied in \cite{DLW2017}, using H\"older regularity.

In this paper, we consider the \textit{exact convergence rate} of the sequence $\{u_k\}$
constructed by the alternating projection method;
\begin{equation}
 u_{k+1} = P_B\circ P_A(u_k),
  \label{eq:AP}
\end{equation}
where $P_A$ and $P_B$ are the projections onto convex sets $A$ and $B$ in $\R^n$, respectively.
To argue exact convergence rates, we restrict ourselves to the case where
$A$ is a semialgebraic convex set defined by one or two polynomials, $B$ is a linear subspace,
and the intersection $A\cap B$ is nontransversal and a singleton.
When $B$ is a line, we can
 directly analyze the recursive equations defining the sequence
 $\{u_k\}$ in \eqref{eq:AP}, and obtain the exact convergence rate.
  Namely, under the conditions of Theorem \ref{thm:hypersurface},
there exist $\lambda, C>0$ such that for any $\epsilon >0$,
\[
 (C - \epsilon)\frac{1}{k^\lambda} \leq \|u_k - \bu\|
 \leq (C + \epsilon)\frac{1}{k^\lambda}
\]
for sufficiently large $k$, where $\bu = \lim_{k\to\infty}u_k$.
Thus, we obtain both of an upper bound and a \textit{lower bound} on the
convergence rate. Moreover, we show that both bounds have
asymptotically the same degree and constant.
By applying Theorem \ref{thm:hypersurface} to the case where $A$ is defined by two polynomials, we can also determine the exact rate from the \textit{initial point} (Example \ref{ex4.12}).
Since one can rarely determine the exact convergence rate of an iterative method for optimization problems, this is a remarkable property of alternating projections.
Our results also improve corresponding estimates of the upper bounds on the
 convergence rate in \cite{BLY2014} to our setting while showing the obtained estimates are tight.
When $B$ has the dimension more than one,
then the exact convergence rate depends on the initial point even in the case where $A$ is defined by a single polynomial,
 and it seems to be hard to determine the exact rate for a general case as discussed in Section
 \ref{section:special}.

When the semialgebraic set $A$ is defined by a single polynomial
inequality, the recursive equations are analyzed
rigorously by using ideals of the ring of
convergent power series.
Then we show that the exact rate is determined by the multiplicity of the
defining polynomial of $A$ at the intersecting point (Theorem
$\ref{thm:hypersurface}$).
When $A$ is defined by two polynomial inequalities
and is in the three-dimensional space,
the boundary of $A$ is partitioned into three regions; two surfaces defined by each
polynomial and a curve defined by the two polynomials.
Then we obtain the exact rate
by using a number that can be
seen as a multiplicity of the curve defined by the two defining
polynomials of $A$
at the intersecting point if a point $b$ on the line $B$ is projected to the curve (Theorem \ref{thm:rate2poly}). 
We also give sufficient conditions that the projection $P_A(b)$ is on the curve for each $b$ on $B$ sufficiently close to the intersecting point (Theorem $\ref{thm:cond2poly}$).
Moreover, we show that the tangent plane to $A$ at the common point of
$A$ and $B$ is explicitly partitioned
into three regions; each of the two regions is projected to
the hypersurface defined by one of the two polynomials, and
the other region is projected to the curve defined by the two
polynomials (Theorem $\ref{thm:region})$.
This partition is calculated by the elimination theory for given polynomials
and is used to obtain the exact rates that depend on the initial points (Example
$\ref{ex4.12}, \ref{ex4.13}$).

The arguments on the exact rates are then applied to obtain upper bounds
of the rate
for the case that $A$ is defined by a single polynomial inequality and
$B$ is a linear subspace with the dimension more than one.
For general cases, we use the \L{}ojasiewicz exponent of the defining
polynomial of $A$ (Theorem $\ref{thm:hshp})$, or that of the restriction of the polynomial
to the linear subspace (Theorem $\ref{thm:loja})$, and give upper bounds.
Furthermore, for a specific polynomial, we obtain the exact rate
which depends on the initial point of the alternating projection method (Proposition $\ref{prop:special})$.
This specific case also shows that our upper bounds are tight.

The organization of the paper is the following. The basic notation and brief
explanations on a projection onto a convex set and on the analytic
implicit function theorem are given in Section $\ref{sec:prelim}$.
In Section $\ref{sec:recursive}$, we obtain the convergence rate of the
sequence defined by a special kind of a recursive equation, or inequality.
In Section $\ref{sec:line}$, we obtain exact convergence rates for
intersections of semialgebraic sets and lines.
Lastly, we give upper bounds for intersections of semialgebraic sets and
subspaces with dimensions more than one in Section $\ref{sec:subsp}$.

\subsubsection*{Related Work}
The alternating projection method has been extensively studied with notions of generalized regularity properties of intersections, such as metric regularity, metric subregularity, transversality,  subtransversality, H\"older regularity; see, e.g. a short survey in \cite[Section 2]{KLT2018} and \cite{DLW2017}.
These studies have built  a rich theoretical foundation on regularity  theory and the worst-case convergence analysis of iterative methods, 
and enable us to analyse far more general settings than traditional ones.
For relations between metric regularity and convergence analysis of iterative methods, see \cite{LTT2018} and references therein.

On the contrary, in this paper, we consider the exact convergence rate for a given instance from a special class of sets and intersections, for which the exact convergence rate of alternating projections can be obtained.
%
In the regularity studies above, the convergence analysis typically uses error bound-type inequalities and their quantitative information to estimate an upper bound on the convergence rate.
However, an estimate on the exact rate requires an estimate on a lower bound on the convergence rate. 
For this purpose, we directly analyze the recurrence equation that defines the sequence constructed by alternating projections, instead of using error bound inequalities.

Note that the intersections considered in this paper are not subtransversal as we can see that the sum rule of the normal cones does not hold at the intersecting point in Example \ref{ex:basic}; see, e.g. \cite[Proposition 5]{KLT2018}. 
By \cite[Corollary 3.4]{BLY2014}, the intersections considered in this paper are subtransversal with a gauge function \cite{LTT2018}, which is much weaker regularity than usual subtransversality.
However, a lower bound on the convergence rate does not seem to be given by the gauge function, since it quantifies regularity via an upper error bound inequality. Neither does the exact rate since it depends on the initial point in general even in the case where $B$ is a line (Example \ref{ex4.12}).
In addition, the gauge function obtained by \cite[Corollary 3.4]{BLY2014} has an exponent which depends on the number of variables and on the maximum degree of constraint polynomials. Thus the upper bound on the convergence rate given in \cite{BLY2014} has a significant gap with the exact rate obtained in this paper which is independent of the number of variables and of the maximum degrees of constraint polynomials.

\section{Preliminaries}
\label{sec:prelim}
\subsection{Notation and Definitions}
Let $\|\cdot\|$ be the Euclidean norm on $\R^n$, $\langle x,y\rangle
= \sum_{i=1}^n x_iy_i$ for $x,y\in \R^n$, and $[n]=\{1,\ldots,n\}$.
Let $\partial A$ denote the boundary of a set $A\subset \R^n$.
The distance $d(x,A)$ from a point $x\in \R^n$ to a set $A\subset \R^n$ is defined by $d(x,A) = \inf_{a\in A}\|x - a\|$.

Let $\R[x]$ and $\R\{x\}$ be the set of polynomials
and the set of convergent power series in the variables $x =
(x_1,\ldots,x_n)$
with coefficients in $\R$, respectively.
For $f_1,\ldots,f_m \in \R\{x\}$, the
ideal generated by $f_1,\ldots,f_m$ is denoted by
$\langle f_1,\ldots,f_m\rangle$; i.e.
$\langle f_1,\ldots,f_m\rangle = \left\{\sum_{i=1}^m h_i f_i: h_i \in \R\{x\}\right\}$.
For $f\in \R\{x\}$,
the set of all the exponents
of the monomials appearing in $f$ is called the \textit{support} of $f$ and
denoted by $\supp f$.
The convex hull of the union $\bigcup_{\kappa\in \supp(f)}(\kappa +
\R^n_{\geq 0})$, which is  denoted by $\Gamma_+(f)$, is called the \textit{Newton diagram} of $f$,
and $\Gamma(f) = \bigcup(\text{compact face of }\Gamma_+(f))$ is called
the \textit{Newton boundary} of $f$.
For each face $\Delta
\in \Gamma(f)$, we define $f_\Delta(x)=\sum\{f_\alpha x^\alpha:\alpha\in \Delta\cap \supp f\}$.
A polynomial $f$ is said to be \textit{nondegenerate} if for each face $\Delta
\in \Gamma(f)$,
\[
 \frac{\partial f_\Delta}{\partial x_1}
 = \cdots =\frac{\partial f_\Delta}{\partial x_n}=0
\]
has no solution in $\left(\R\setminus\{0\}\right)^n$.

For $f, g:\R \to \R$, we write $f(x) = O(g(x))$ as $x\to \infty$
if there exist $C, M>0$ such that
$|f(x)| \leq C g(x)$ for all $x$ with $|x| > M$.
We also write $f(x) = \Theta(g(x))$ as $x\to \infty$
if there exist $C_1, C_2 > 0$ such that
$C_1 g(x) \leq f(x) \leq C_2 g(x)$ for all $x$ with $|x| > M$.
The meaning of the statement $f(x) = O(g(x))$ as $x\to 0$ is defined similarly.
If there is no ambiguity, we simply write $f(x) = O(g(x))$, or $f(x) = \Theta(g(x))$.
For a sequence $\{u_k\}\subset \R^n$ with $\bu = \lim_{k\to \infty}u_k$, we say that  $\{u_k\}$ converges 
in the rate $O(g(k))$
if $\|u_k - \bu\| = O(g(k))$ as $k\to \infty$, and 
in the \textit{exact rate}
$\Theta(g(k))$
if $\|u_k - \bu\| = \Theta(g(k))$ as $k\to \infty$.

\subsection{The projection and the implicit function theorem.}
\label{sec:proj_implicit}
We briefly review the projection and the implicit function theorem.
Let $B = \{x\in \R^{n}:f_i(x)\geq 0,i \in [m]\}$ for $f_i \in \R[x]$ for
$i \in [m]$. For $x\in B$, an index $i$ is said to be \textit{active} at
$x$ if $f_i(x) = 0$.
We say that $B$ is \textit{smooth} if
$\{\nabla f_i(x):i \text{ is active at }x\}$ is linearly independent for all $x\in B$.
In this paper, we define smoothness of $B$ for the particular defining polynomials of $B$. 
For a closed convex set $A\subset \R^n$ and $p \in \R^n$,
it is known that there exists
a unique optimal solution to
\begin{equation}
 \mathrm{minimize}
 \{\|x - p\|^2:x \in A\}.
 \label{min:projection}
\end{equation}
The optimal solution is called
the \textit{projection} of $p$ onto $A$ and denoted by $P_A(p)$.
  \begin{lemma2}
  \label{lemma:projection}
   Let $f_i \in
\R[x],\ i\in [m]$ and 
$A = \{x\in \R^{n}:f_i(x)\geq 0,\ i\in [m]\}$.
Suppose that $A$ is convex and $\{\nabla f_i(x):i \text{ is active at }x\}$ is linearly independent for all $x\in B.$ 
Then $u = P_A(p)$ if and only if there exist $c_i\in \R$ such that
\begin{equation}
 u - p = \sum_{i=1}^m c_i \nabla f_i(u),\ f_i(u) \geq 0,\ c_i\geq 0,\ c_i f_i(u) = 0,\ i\in [m].
 \label{eq:projection}
\end{equation}
  \end{lemma2}
\begin{proof}
 Since the objective function of $(\ref{min:projection})$ and $A$ are convex,
 we see that $u$ is optimal if and only if
 $-(u - p) \in N_A(u)$, where $N_A(u)$ is the normal cone of $A$ at
 $u$; i.e. $N_A(u)=\{y \in \R^n:\langle y, u' - u
 \rangle \leq 0, u'\in A\}$; 
 see, e.g. \cite{RW1998}.
 Then linear independence of $\{\nabla f_i(x):i \text{ is active at }x\}$ ensures that the equality
 \[
  N_A(u) = \left\{\sum_{i=1}^m c_i \nabla f_i(u):
 c_i \leq 0,\ c_i f_i(u)=0,\ i \in [m]\right\}
 \]
 holds \cite[Theorem 6.14]{RW1998}.
\end{proof}

Since we consider polynomial systems, the analytic implicit function
theorem ensures that the solution functions are
convergent power series; see, e.g. Theorem 6.1.2 and the following paragraph of \cite{KP2013}. 
 \begin{thm}
  [The implicit function theorem]
  \label{thm:implicit}
Let $m\leq n$ and $f_1,\ldots,f_m$ be polynomials in $\R[x,y] :=
\R[x_1,\ldots,x_m,y_1,\dots,y_{n-m}]$. Consider the system of equations
\[
 f_1(x,y) = \cdots = f_m(x,y) = 0.
\]
For a solution $(\bx,\by)$ to the system above, if $\{
\nabla f_{i,x}(\bx,\by):i\in [m]\}$ is linearly independent, then
 there exists a unique map $\phi(y)=(\phi_1(y),\ldots,\phi_m(y))$, where each $\phi_i(y)$ is a convergent power series 
around $\by$ such that
$\bx = \phi(\by)$ and $f_i(\phi(y),y)=0$ for $i\in [m]$ and $y$ close to $\by$.
\end{thm}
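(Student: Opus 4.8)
The statement is the analytic implicit function theorem, quoted here from \cite{KP2013}; the proof I would give is the classical one combining a \emph{formal} power series solution with the \emph{method of majorants}. After an affine change of coordinates we may assume $(\bx,\by)=(0,0)$ and $f_i(0,0)=0$, and we write each polynomial as $f_i(x,y)=\sum_{j=1}^m a_{ij}x_j+r_i(x,y)$, where $r_i$ collects the terms of total degree $\ge 2$ in $(x,y)$. The hypothesis that $\{\nabla f_{i,x}(0,0):i\in[m]\}$ is linearly independent says exactly that the $m\times m$ matrix $A=(a_{ij})$ is invertible. Equivalently one may package everything through the analytic inverse function theorem applied to $F(x,y)=(f_1(x,y),\dots,f_m(x,y),y)$, whose Jacobian at the origin is block triangular with diagonal blocks $A$ and $I_{n-m}$, reading off $\phi$ from the first $m$ coordinates of a local inverse of $F$; but the inverse function theorem itself is proved the same way, so I describe the direct argument.

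First I would establish existence and uniqueness of a formal solution $\phi=(\phi_1,\dots,\phi_m)$, $\phi_k(y)=\sum_{|\alpha|\ge 1}c_{k,\alpha}y^\alpha$, to the system $f_i(\phi(y),y)=0$. Substituting the ansatz and equating the coefficient of a monomial $y^\alpha$ with $|\alpha|=d\ge 1$ yields a linear system $A\,c_{\cdot,\alpha}=b_\alpha$, where $b_\alpha$ is a fixed polynomial in the lower-order unknowns $\{c_{\cdot,\beta}:|\beta|<d\}$ whose coefficients are, up to signs, nonnegative integer combinations of the coefficients of the $r_i$; indeed every monomial of $r_i$ has total degree $\ge 2$, so after substituting $x=\phi(y)$ (which has no constant term) its contribution to the $y^\alpha$-coefficient involves only $c_{\cdot,\beta}$ with $|\beta|<d$. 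Since $A$ is invertible, $c_{\cdot,\alpha}$ is uniquely determined, and induction on $d$ produces a unique formal power series solution $\phi$ with $\phi(0)=0$.

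It remains to show that $\phi$ converges near $\by$. Since the $f_i$ are polynomials, there are constants $M,\rho>0$ such that every coefficient of every $r_i$ is dominated in absolute value by the corresponding coefficient of the analytic function $R(x,y)=Ms^2/(\rho^2-\rho s)$, where $s=x_1+\dots+x_m+y_1+\dots+y_{n-m}$; let $\mu>0$ be a lower bound on the smallest singular value of $A$, so that $\|A^{-1}\|\le \mu^{-1}$. Running the recursion of the previous paragraph for the majorized scalar equation obtained by replacing $A$ by $\mu I$ and each $r_i$ by $R$ produces a single series $\Psi$ in $t=y_1+\dots+y_{n-m}$; this $\Psi$ satisfies an explicit algebraic equation (it reduces to a quadratic in the scalar unknown), so it is a convergent power series with nonnegative coefficients and $\Psi(0)=0$. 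A term-by-term induction on $|\alpha|$—using that the polynomials $b_\alpha$ combine the lower-order coefficients only through sums and products with nonnegative weights, and that $\|A^{-1}\|\le\mu^{-1}$—shows that $|c_{k,\alpha}|$ is at most the coefficient of $t^{|\alpha|}$ in $\Psi$ for all $k$ and $\alpha$. Hence each $\phi_k$ has a positive radius of convergence, $f_i(\phi(y),y)=0$ holds identically for $y$ near $\by$ by continuity, and uniqueness among convergent solutions follows from uniqueness of the formal one (a convergent solution has a Taylor expansion that must coincide with $\phi$).

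I expect the main obstacle to be the majorant bookkeeping in the vector-valued case $m>1$: one must exhibit a single majorant dominating all of the $f_i$ simultaneously while keeping the associated majorized equation explicitly solvable with a convergent, nonnegative-coefficient solution, and one must reconcile the generally non-diagonal action of $A^{-1}$ with the nonnegativity used in the comparison—most cleanly by first conjugating or rescaling $A$ down to the scalar bound $\mu I$ before running the induction. Once a workable majorant is fixed the coefficient comparison is routine, and the subsequent specialization to the situation of the theorem (or, if one prefers, the reduction to the inverse function theorem via $F$) is immediate.
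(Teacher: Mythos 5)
The paper does not prove this statement: it is quoted as the classical analytic implicit function theorem with a pointer to Theorem~6.1.2 of Krantz--Parks \cite{KP2013}, so there is no in-paper argument to compare against. Your proposal is the standard proof of exactly that cited result --- unique formal power series solution by induction on degree using invertibility of the linear part $A$, followed by the method of majorants for convergence --- and it is essentially sound.

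Two small points of bookkeeping. First, your decomposition $f_i(x,y)=\sum_j a_{ij}x_j+r_i(x,y)$ with $r_i$ consisting of terms of total degree $\ge 2$ omits the terms of $f_i$ that are linear in $y$; these must either be listed separately or folded into $r_i$ (in which case the claim that every monomial of $r_i$ has degree $\ge 2$ fails, though the recursion is still well-founded since such terms involve no unknown coefficients). They are what drive the base case $|\alpha|=1$ of the induction. Second, as you yourself note, comparing $A^{-1}b_\alpha$ componentwise against a nonnegative majorant does not work directly because $A^{-1}$ need not have nonnegative entries; the clean fix is not a singular-value bound but simply to premultiply the system by $A^{-1}$ at the outset, so that the linear part in $x$ becomes the identity and the inflation by $\|A^{-1}\|$ is absorbed into the majorant constant $M$. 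With those repairs the coefficient comparison against the quadratic majorant equation is routine, and your uniqueness argument (a convergent solution has a Taylor expansion, which must equal the unique formal one) matches what the theorem asserts.
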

\begin{remark}
\label{remark:implicit}
 Let $n\leq m$ and $A = \{x\in \R^n:f_i(x) = 0,\ i \in [m]\}$.
 Suppose that the Jacobian matrix $\frac{\partial (f_1,\dots,f_m)}{\partial (x_1,\ldots,x_n)}(\bx)$ has  full rank at $\bx\in A$. 
 Then Theorem $\ref{thm:implicit}$ implies that for a tangent vector $v$ to $A$ at $\bx$, there exists a convergent power series $\phi(s)$ around $\bs$ such that $\phi(\bs) = \bx,\ \phi'(\bs)= v$ and $\phi(s)\in A$ for $s$ close to $\bs$; see, e.g. \cite[Exercise 6.7]{RW1998}. 
\end{remark}

\section{Recursive equation and inequality}
\label{sec:recursive}
The following lemma and corollary
give the convergence rate of the sequence defined by
recursive equation and inequality. They are
fundamental tools for our arguments and will be used repeatedly in the paper.
 \begin{lemma2}
  \label{lemma:basic}
 Suppose that the sequence $\{x_k\}$ satisfies $x_k>0$, $x_k \to 0$, and
 \[
  x_{k+1}\left(1 + Cx_{k+1}^{q} + x_{k+1}^{q+1}h(x_{k+1})\right) = x_k \ (k = 0, 1, \ldots),
 \]
 for some $C>0$, $q\in \N$ and a convergent power series $h(x)$.
  Then
  \[
   \lim_{k\to \infty}\left(qC\right)^{\frac{1}{q}}k^{\frac{1}{q}}x_k
 = 1.
  \]
 \end{lemma2}
 \begin{proof}
  First, we show that
  \[
   g(x) = \frac{\left(1 + Cx^q +
 x^{q+1}h(x)\right)^q - 1}{qCx^q}
  \]
  is a convergent power series around $x=0$, and $\displaystyle\lim_{x\to0} g(x) = 1$.
  In fact, we have
  \begin{align*}
   & qCx^q g(x) = \sum_{i=1}^q \binom{q}{i}(1 +
   Cx^q)^{q-i}\left(x^{q+1}h(x)\right)^i + (1 + Cx^q)^q - 1 \\
   & \phantom{qCx^q g(x)} = x^{q+1}h(x)\sum_{i=1}^q \binom{q}{i}(1 +
   Cx^q)^{q-i}\left(x^{q+1}h(x)\right)^{i-1} + \sum_{i=1}^q \binom{q}{i}(Cx^q)^i,\\
   & g(x) = \frac{xh(x)}{qC}\sum_{i=1}^q \binom{q}{i}(1 +
   Cx^q)^{q-i}\left(x^{q+1}h(x)\right)^{i-1} + \frac1q \sum_{i=2}^q
   \binom{q}{i}(Cx^q)^{i-1} + 1.
  \end{align*}
 Now we see that
\begin{align*}
   qCx_{k}^q & = qCx_{k+1}^q \left(1 + Cx_{k+1}^q +
 x_{k+1}^{q+1}h(x_{k+1})\right)^q \\
 &  = qCx_{k+1}^q \left(1 + qCx_{k+1}^q g(x_{k+1})\right),\\
 \frac{1}{qC x_{k+1}^q} - \frac{1}{qC x_k^q} & =
 \frac{1}{qC x_{k+1}^q} - \frac{1}{qC x_{k+1}^q \left(1 + qCx_{k+1}^q
 g(x_{k+1})\right)}\\
 & = \frac{g(x_{k+1})}{1 + qCx_{k+1}^q
 g(x_{k+1})}.
\end{align*}
  By summing the equation, we obtain
\[
  \frac{1}{qC x_{k}^q} - \frac{1}{qC x_0^q} 
  = \sum_{i=1}^k\frac{g(x_{i})}{1 + qCx_{i}^q
 g(x_i)},
\]
  and hence
\[
  \lim_{k\to \infty}\frac{1}{k qC x_{k}^q} 
  =  \lim_{k\to \infty}\frac{1}{k qC x_0^q}
  + \lim_{k\to \infty}\frac{1}{k}\sum_{i=1}^k\frac{g(x_{i})}{1 + qCx_{i}^q
 g(x_i)} = 1,
\]
 since the last summation is a Ces\`aro mean and $x_k\to 0$.
 \end{proof}

 \begin{cor}
  \label{cor:basic}
 Suppose that the sequence $\{x_k\}$ satisfies $x_k \geq 0$, $x_k\to 0$ and
 \[
  x_{k+1}\left(1 + Cx_{k+1}^{q} + x_{k+1}^{q+1}h(x_{k+1})\right) \leq x_k \ (k = 0, 1, \ldots),
 \]
 for some $C>0$, $q\in \N$ and a convergent power series $h(x)$.
 Then
 \[
  \limsup_{k\to \infty}\left(qC\right)^{\frac{1}{q}}k^{\frac{1}{q}}x_k
 \leq 1. 
 \]
 \end{cor}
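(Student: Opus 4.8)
The plan is to reduce the inequality to the equality treated in Lemma \ref{lemma:basic} by a monotone comparison argument. Write $\Phi(x) = x\bigl(1 + Cx^{q} + x^{q+1}h(x)\bigr) = x + Cx^{q+1} + x^{q+2}h(x)$, so that the hypothesis reads $\Phi(x_{k+1}) \le x_k$. Since $\Phi(0) = 0$, $\Phi'(0) = 1$, and $\Phi(x) - x = x^{q+1}\bigl(C + xh(x)\bigr)$, there is a $\delta > 0$ such that on $[0,\delta]$ the function $\Phi$ is strictly increasing and satisfies $\Phi(x) > x$ for $x \in (0,\delta]$. Consequently $\Phi$ restricts to an increasing bijection of $[0,\delta]$ onto $[0,\Phi(\delta)] \supseteq [0,\delta]$, its inverse $\Phi^{-1}$ is increasing, and $\Phi^{-1}(t) \le t$ for $t \in [0,\delta]$.

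Before the comparison, I would dispose of the degenerate case: if $x_k = 0$ for some index $k$ beyond which the factor $1 + Cx_{k+1}^q + x_{k+1}^{q+1}h(x_{k+1})$ is positive (which holds for all large $k$ since $x_k \to 0$), then $x_{k+1} = 0$, and inductively $x_j = 0$ for all subsequent $j$; the claimed inequality is then trivial. So I may assume $x_k > 0$ for all $k$, after discarding finitely many terms, which does not affect the $\limsup$. Next, pick $k_0$ with $x_k < \delta$ for all $k \ge k_0$. For such $k$, both $x_{k+1}$ and $\Phi^{-1}(x_k)$ lie in $[0,\delta]$ and $\Phi(x_{k+1}) \le x_k = \Phi\bigl(\Phi^{-1}(x_k)\bigr)$, so monotonicity of $\Phi$ on $[0,\delta]$ gives $x_{k+1} \le \Phi^{-1}(x_k)$.

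Now define the comparison sequence by $y_{k_0} := x_{k_0}$ and $y_{k+1} := \Phi^{-1}(y_k)$. Since $\Phi^{-1}$ maps $(0,\delta]$ into $(0,\delta]$, is increasing, and satisfies $\Phi^{-1}(t) \le t$, the sequence $\{y_k\}_{k \ge k_0}$ lies in $(0,\delta]$ and is nonincreasing, hence converges to some $\ell \in [0,\delta]$ with $\Phi(\ell) = \ell$; as $\Phi(x) > x$ on $(0,\delta]$, this forces $\ell = 0$. A straightforward induction using monotonicity of $\Phi^{-1}$ together with the inequality $x_{k+1} \le \Phi^{-1}(x_k)$ from the previous step gives $x_k \le y_k$ for all $k \ge k_0$. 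Finally, the shifted sequence $z_j := y_{j+k_0}$, $j \ge 0$, is positive, tends to $0$, and satisfies $z_{j+1}\bigl(1 + Cz_{j+1}^q + z_{j+1}^{q+1}h(z_{j+1})\bigr) = \Phi(z_{j+1}) = z_j$, so Lemma \ref{lemma:basic} yields $\lim_{j\to\infty}(qC)^{1/q}j^{1/q}z_j = 1$; since $(k-k_0)^{1/q}/k^{1/q} \to 1$ this is the same as $\lim_{k\to\infty}(qC)^{1/q}k^{1/q}y_k = 1$, and combining with $0 \le x_k \le y_k$ gives $\limsup_{k\to\infty}(qC)^{1/q}k^{1/q}x_k \le 1$.

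The only delicate point is the bookkeeping that confines every iterate to the interval $[0,\delta]$ on which $\Phi$ is a genuine increasing bijection, so that inverting $\Phi$ and invoking its monotonicity are legitimate; once this is arranged, the comparison with the equality sequence and the appeal to Lemma \ref{lemma:basic} are routine.
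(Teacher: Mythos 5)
Your proof is correct, but the route is genuinely different from the paper's. The paper re-runs the telescoping computation from Lemma \ref{lemma:basic} with inequalities in place of equalities: raising the hypothesis to the $q$-th power, expressing $(1 + Cx^q + x^{q+1}h(x))^q = 1 + qCx^qg(x)$ with the auxiliary power series $g$, obtaining $\tfrac{1}{qCx_{k+1}^q} - \tfrac{1}{qCx_k^q} \ge \tfrac{g(x_{k+1})}{1 + qCx_{k+1}^q g(x_{k+1})}$, summing, and invoking the Ces\`aro mean to get $\liminf_k \tfrac{1}{kqCx_k^q} \ge 1$. You instead use a monotone comparison (domination) argument: show the update map $\Phi(x) = x(1 + Cx^q + x^{q+1}h(x))$ is an increasing bijection of a neighborhood $[0,\delta]$ onto a superset of itself with $\Phi(x) > x$, so that the hypothesis yields $x_{k+1} \le \Phi^{-1}(x_k)$, build a majorizing sequence $y_{k+1} = \Phi^{-1}(y_k)$ which satisfies the equality version exactly, apply Lemma \ref{lemma:basic} to it, and conclude by $x_k \le y_k$. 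Both are correct. The paper's proof is a minimal edit of the lemma's internal computation and needs no extra setup beyond re-using $g$. Your proof is arguably more conceptual: it treats Lemma \ref{lemma:basic} strictly as a black box and makes the comparison principle explicit, at the modest cost of verifying that $\Phi$ is a local increasing bijection and tracking the interval bookkeeping so $\Phi^{-1}$ may be applied and iterated. Your handling of the degenerate case $x_k = 0$ and the index shift $z_j = y_{j+k_0}$ (using $(j+k_0)^{1/q}/j^{1/q} \to 1$) is careful and complete.
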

 \begin{proof}
Since $x_k\to 0$, there exists $k_0$ such that 
  $1 + Cx_{k+1}^{q} +
  x_{k+1}^{q+1}h(x_{k+1})>0$ for $k \geq k_0$.
  If $x_k=0$ for some $k>k_0$, we have $x_{k+1} = 0$.
  Then the desired inequality holds.

  Thus, for each $k \geq k_0$, we assume $x_k>0$. By using $g(x)$ in Lemma
  \ref{lemma:basic}, we have
\begin{align*}
   qCx_{k}^q & \geq qCx_{k+1}^q \left(1 + Cx_{k+1}^q
 g(x_{k+1})\right),\\
 \frac{1}{qC x_{k+1}^q} - \frac{1}{qC x_k^q} & \geq
 \frac{1}{qC x_{k+1}^q} - \frac{1}{qC x_{k+1}^q \left(1 + qCx_{k+1}^q
 g(x_{k+1})\right)}\\
 & = \frac{g(x_{k+1})}{1 + qCx_{k+1}^q
 g(x_{k+1})} 
\end{align*}
  Since the limit of a Ces\`aro mean is not affected by an absence of the finite number of terms,
  the similar arguments in Lemma \ref{lemma:basic} implies that
  $\displaystyle\liminf_{k\to \infty} \frac{1}{k q C x_k^q}\geq 1$, and hence
  $\displaystyle\limsup_{k\to \infty} q C k x_k^q \leq 1$.
  Therefore, $\displaystyle\limsup_{k\to \infty} (q C)^\frac{1}{q} k^\frac{1}{q} x_k \leq 1$.
 \end{proof}
\begin{remark}
 If a sequence $\{x_k\}$ satisfies
 $\lim_{k\to\infty}Ck^\frac{1}{q}x_k=1$ for $C>0$,
 then, for any $\epsilon >0$, we have $(1 - \epsilon)C^{-1} k^{-\frac{1}{q}} \leq x_k
 \leq (1 + \epsilon)C^{-1} k^{-\frac{1}{q}}$ for sufficiently large $k$.
 Thus $x_k = \Theta(k^{-\frac{1}{q}})$.
 Therefore, it is implied that
 the condition $\lim_{k\to\infty}Ck^\frac{1}{q}x_k=1$ is a stronger property than the property $x_k = \Theta(k^{-\frac{1}{q}})$.
 Similarly, $\displaystyle\limsup_{k\to \infty} C k^\frac{1}{q} x_k \leq
 1$
 implies $x_k = O(k^{-\frac{1}{q}})$.
\end{remark}

 \section{Intersections with Lines}
 \label{sec:line}
 We consider the intersection of a semialgebraic convex set $A$ and a
linear subspace $B$. 
Let $P_A$ and $P_B$ be projections to $A$ and $B$ respectively.
We assume that the intersection is nontransversal and a singleton.
By translation, we also assume $A\cap B=\{0\}$. 
The alternating projection method constructs a sequence $\{u_k\}\subset
\R^{n+1}$ by \eqref{eq:AP}.
Note that $u_k$ converges to $0$; see, e.g. \cite[Fact 2.14]{BLY2014}.
In this section, we investigate the exact convergence rate of the alternating projection method in the case that $B$ is a line.
 
 \subsection{Hypersurfaces}
  \label{section:hypersurface}
  When we consider the alternating projection method for semialgebraic sets, only the boundaries have a crucial role.
  Thus, by an abuse of terminology, we call a semialgebraic set $A$ a \textit{hypersurface} if it is defined by a single polynomial.
In this section, we consider the case that $A$ is a hypersurface and $B$ is a line.
The following lemmas are stated in sufficient generality
that they can be used in later sections.
First, we give a characterization of the projection onto a hypersurface.
\begin{lemma2}
 \label{lemma:APsystem}
 For a nonnegative convex polynomial $g$, let
 $A = \{(x,z)\in \R^n\times \R:z \geq g(x)\}$.
For $(X,0)\notin A$, we have $(x,z) = P_A(X,0)$ if and only if
 the system
\[
 x_i + g_{x_i}(x)g(x) = X_i,\ i \in [n],\ z = g(x)
\]
 holds.
\end{lemma2}

\begin{proof}
Let $(X,0)\notin A$. Then the projection of $(X,0)$ onto $A$ lies on the boundary of $A$ by the definition.
Since $A$ is convex and $\nabla(z - g(x))$ is a nonzero 
vector for any $(x,z)$,
Lemma \ref{lemma:projection} implies that  $(x,z) = P_A(X,0)$ if and only if
 \[
  \begin{pmatrix}
   x\\
   z
  \end{pmatrix}
 - \begin{pmatrix}
    X \\
    0
   \end{pmatrix} = s\begin{pmatrix}
		     -\nabla g(x) \\
		     1
		    \end{pmatrix},\ z = g(x),
 \]
 for some $s\geq 0$. Since $s = z = g(x)$, we obtain the desired system.
\end{proof}
To analyze the equations in Lemma \ref{lemma:APsystem}, we need the following technical lemma for ideals.
 \begin{lemma2}
  \label{lemma:nakayama}
 Let $I, \fa, \fm$ be ideals in $\R\{x_1,\ldots,x_n\}$.
 If $I \subset \fa + \fm I$ and $\fm^s \subset \fa$ for some $s\in \N$,
 then $I \subset \fa$.
 \end{lemma2}
\begin{proof}
 We prove $I \subset \fa + \fm^k I$ for $k\in \N$ by induction. 
 Suppose $I \subset \fa + \fm^k I$. Then we have
 \[
 I \subset \fa + \fm^k(\fa + \fm I) = \fa + \fm^k \fa + \fm^{k+1} I
 = \fa + \fm^{k+1}I.
 \]
 Thus the claim is proved. Since $\fm^s \subset \fa$, we obtain $I
 \subset \fa$.
\end{proof}

By applying the lemmas above, we obtain an inclusion relation for ideals that gives a lower bound on the lowest degrees of convergent power series that solve recursive equations.
Note that for a polynomial $g(x)$ with $n$ variables, we say that $\Gamma(g)$ meets all the axes
if $g$ has a monomial $x_i^{d_i}$ with $d_i>0$ for
each $i=1,\dots,n$.

 \begin{lemma2}
  \label{lemma:newton}
For $1\leq m \leq n-1$, $(x,y)\in \R^m\times \R^{n-m}$ and 
a polynomial $g(x,y)$, we consider the system
\[
(*)\ x_{i} + g_{x_i}(x,y)
   g(x,y) = 0,\ i = 1,\ldots, m.
\]
  Suppose that $g(0,0)=g_{x_i}(0,0) = 0$ and $\Gamma(g(0,y))$ meets all the axes.
  Define ideals
  $\fm = \langle y_1,\dots,y_{n-m}\rangle$, $\fa = \langle
  y^\alpha:\alpha \in \supp(g(0,y))\rangle$,
  and $I=\langle \phi_1(y),\ldots, \phi_m(y)\rangle$ of $\R\{y\}$,
  where
  $\phi_i(y)$ is the convergent power series which solves $(*)$ as
  $x_i=\phi_i(y)$ and $\phi_i(0) = 0$ for  $i = 1,\ldots,m$.  
  Then we have $I \subset \fm\fa$.
 \end{lemma2}
\begin{proof}
Let $F=(x_i + g_{x_i}(x,y)g(x,y))_{i \in [m]}$.
  Since $g(0,0)=g_{x_i}(0,0)=0$, we see that $\left(\frac{\partial F_i}{\partial x_j}(0,0)\right)_{i,j}$ is the identity matrix. By the implicit function theorem (Theorem \ref{thm:implicit}), 
  there exist convergent power series $\phi_i(y)$
  which solve 
  the equation $(*)$ as $x_i = \phi_i(y)$ and
  $\phi_i(0) = 0$ for $i
  = 1, \ldots, m$.
  Let $\phi(y) = (\phi_1(y), \ldots, \phi_{n-r}(y))$.
  Then we have, for some polynomials $p_j$,
\begin{align*}
\phi_i(y) = x_i & = -g_{x_i}(\phi(y),y)g(\phi(y),y) \\
 & = - g_{x_i}(\phi(y),y)\left(g(0,y) +
 \sum\nolimits_{j=1}^m \phi_j(y) p_j(\phi(y),y)\right)\\
& = - g_{x_i}(\phi(y),y)g(0,y) 
 - g_{x_i}(\phi(y),y)\sum\nolimits_{j=1}^m \phi_j(y) p_j(\phi(y),y) 
\end{align*}
 Since $g_{x_i}(0,0) = 0$, we have $g_{x_i}(\phi(y),y) \in \fm$.
  Thus the above equality implies that
  \[
   I \subset \fm \fa + \fm I.
  \]
Since $\Gamma(g(0,y))$ meets all the axes,
  we have $\fm^s \subset \fm \fa$ for $s = (n - m)(\deg g(0,y) + 1)$.
  By applying Lemma \ref{lemma:nakayama}, we obtain $I \subset \fm \fa$.
\end{proof}
Now, we state the main theorem in this section, which gives a formula for the exact rate using the multiplicity of a defining polynomial. For a convex polynomial $g$ and $a\in \R^n\setminus\{0\}$, let
\begin{align*}
 A & = \{(x,z) \in \R^n\times \R: z \geq g(x)\},\\
 B & = \{t(a,0) \in \R^n\times \R: t\in \R\},
\end{align*}
Suppose that $g(x)>0$ for $x\neq 0$ and $g(0)=0$.
   \begin{thm}
   \label{thm:hypersurface}
Suppose that $g(\|a\|^{-1}a t) = c_0 t^d + O(t^{d+1})$ for some $c_0,d>0$.
Then the sequence $\{u_k\}$ constructed by the alternating projection method $\eqref{eq:AP}$
   converges to $A\cap B$ with the exact rate $\Theta(k^{\frac{-1}{2d-2}})$.
   More precisely, we have
   \begin{equation}
    \lim_{k\to \infty}\left((2d-2)dc_0^2\right)^{\frac{1}{2d-2}}k^{\frac{1}{2d-2}}\|u_k\|
 = 1.
   \label{eq:hypersurface}
   \end{equation}
   \end{thm}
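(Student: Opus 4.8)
The plan is to reduce one step of the alternating projection to an explicit scalar recursion $t_{k+1}=\varphi(t_k)$ with $\|u_k\|=|t_k|$, to show that $\varphi$ is a convergent power series of the form $\varphi(t)=t-dc_0^2\,t^{2d-1}+O(t^{2d})$, and then to apply Lemma \ref{lemma:basic} to the inverse power series $\varphi^{-1}$ in order to read off the exact rate.

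First I would normalise: replacing $a$ by $a/\|a\|$ changes nothing, and an orthogonal change of the $x$-coordinates commutes with both projections and preserves the form of $A$ and $B$, so we may assume $a=e_1$, i.e. $g(te_1)=c_0t^d+O(t^{d+1})$. Since $g\ge 0$ is convex with $g(0)=0$, we have $\nabla g(0)=0$ and $d\ge 2$ (hence $2d-2\in\N$ and $C:=dc_0^2>0$); moreover $A\cap B=\{0\}$ and the intersection is nontransversal. For $u_k=(t_ke_1,0)\in B$ with $t_k\ne0$, Lemma \ref{lemma:APsystem} gives $P_A(u_k)=(x(t_k),g(x(t_k)))$, where $x(t)$ solves the system $x_i+g_{x_i}(x)g(x)=t\,\delta_{i1}$ for $i\in[n]$; since $\|a\|=1$, $P_B$ is projection onto the first coordinate, so $t_{k+1}=x_1(t_k)=:\varphi(t_k)$ and $\|u_{k+1}\|=|t_{k+1}|$. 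By the implicit function theorem $x(t)$ is a convergent power series with $x(0)=0$ and $x'(0)=e_1$; in particular $\varphi(t)=t+O(t^2)$, so $\varphi(t)$ has the same sign as $t$ for $|t|$ small, and (using $u_k\to 0$ from \cite{BLY2014}) we may assume $t_k>0$ for all $k$, after discarding the degenerate case in which the sequence reaches $A\cap B$ in finitely many steps.

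The crux is the second-order behaviour of $\varphi$. Applying Lemma \ref{lemma:newton} with the ``$x$''-variables $x_2,\dots,x_n$ and the single ``$y$''-variable $x_1$ — whose hypotheses hold because $g(x_1,0,\dots,0)=c_0x_1^d+\cdots$ has Newton boundary meeting the only axis — the off-line components of the solution satisfy $x_i(t)=O(t^{d+1})$ for $i=2,\dots,n$. Substituting into the remaining equation $t=x_1(t)+g_{x_1}(x(t))\,g(x(t))$ and Taylor-expanding $g$ and $g_{x_1}$ about $(x_1(t),0,\dots,0)$, the off-line corrections contribute only $O(t^{d+2})$ to $g$ and $O(t^{d+1})$ to $g_{x_1}$, so $g(x(t))=c_0t^d+O(t^{d+1})$ and $g_{x_1}(x(t))=dc_0t^{d-1}+O(t^d)$. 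Multiplying, $g_{x_1}(x(t))\,g(x(t))=dc_0^2\,t^{2d-1}+O(t^{2d})$, whence $\varphi(t)=x_1(t)=t-dc_0^2\,t^{2d-1}+O(t^{2d})$.

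Finally, since $\varphi'(0)=1$, the inverse $\Psi=\varphi^{-1}$ is again a convergent power series, and from the expansion of $\varphi$ it has the form $\Psi(t)=t\bigl(1+dc_0^2\,t^{2d-2}+t^{2d-1}h(t)\bigr)$ with $h\in\R\{t\}$. The relation $t_k=\Psi(t_{k+1})$ is exactly the hypothesis of Lemma \ref{lemma:basic} with $q=2d-2$ and $C=dc_0^2$, which gives $\lim_{k\to\infty}\bigl((2d-2)dc_0^2\bigr)^{1/(2d-2)}k^{1/(2d-2)}\|u_k\|=1$, and the exact rate $\Theta(k^{-1/(2d-2)})$ then follows from the Remark after Corollary \ref{cor:basic}. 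I expect the main obstacle to be the sharp vanishing estimate $x_i(t)=O(t^{d+1})$: the naive implicit function theorem only yields $O(t^2)$, and one genuinely needs the Nakayama-type ideal argument of Lemma \ref{lemma:newton} to force the off-line coordinates to vanish to order above $d$, which is precisely what makes the leading coefficient $c_0$ of $g$ along the direction $a$ — rather than some lower-order mixed behaviour of $g$ — govern the rate. The remaining steps (the reductions, the Taylor expansion, the power-series inversion) are routine.
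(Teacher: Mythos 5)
Your proposal is correct and follows essentially the same route as the paper: reduce to a scalar recurrence via Lemma \ref{lemma:APsystem}, use the implicit function theorem plus the Nakayama-type argument of Lemma \ref{lemma:newton} to show the off-line coordinates vanish to order $d+1$, and then invoke Lemma \ref{lemma:basic}. The only cosmetic difference is that you parametrize the solution by the \emph{old} coordinate $t_k$ and then invert the power series $\varphi$, whereas the paper parametrizes by the \emph{new} coordinate $x_n=t_{k+1}$ from the start, so the relation $t_k=\tilde t+dc_0^2\tilde t^{\,2d-1}+O(\tilde t^{\,2d})$ is already in the form Lemma \ref{lemma:basic} requires and no inversion is needed.
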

 \begin{proof}
By rotating about $z$ axis, we may assume that 
 $B = \{t(e_n,0)\in \R^n \times \R: t\in
 \R\}$, where $e_n=(0,\ldots,0,1)\in \R^n$.
 Then, we have
 \[
  g(0,x_n) = c_0 x_n^d + O(x_n^{d+1}).
 \]
  Since $g(x)>0$ for $x\neq 0$, we see $d\geq 2$.
  
 For a point $u_0=(te_n,0)$ of $B$,
 let $u = (x,z) = P_A(u_0)$ and $u_1 = (\tt e_n,0) = P_B(u)$.
 Then Lemma \ref{lemma:APsystem} implies that 
\begin{align}
&  x_i + g_{x_i}(x)g(x) = 0,\ i \in [n-1], \label{eq:ap1}\\
 & x_n + g_{x_n}(x)g(x) = t,\label{eq:ap2}\\
 & \tt = x_n.\notag
\end{align} 
By the implicit function theorem, there exist convergent power series
 $\phi_i(x_n)$ which solve equations $(\ref{eq:ap1})$ as $x_i =
 \phi_i(x_n)$
 with $\phi_i(0) = 0$ for $i = 1,\ldots,n-1$.
Here we note that $g(0) = 0$, $g_{x_i}(0)=0$ and $g(0,x_n)=c_0x_n^d + O(x_n^{d+1})$ with $c_0\neq 0$.
  Then we can apply Lemma \ref{lemma:newton} to the equation $(\ref{eq:ap1})$,
  where $r = 1$, $\fm = \langle x_n \rangle$,
  $\fa = \langle x_n^d \rangle$, and $I = \langle \phi_1(x_n), \ldots,
  \phi_{n-1}(x_n)\rangle$. Thus we obtain $\phi_i(x_n) \in \langle
  x_n^{d+1}\rangle$,
  which means $\phi_i(x_n) = O(x_n^{d+1})$ 
  for $i = 1, \ldots, n-1$.

 Next, we will modify the equation $(\ref{eq:ap2})$ to obtain a relation between $\|u_0\|$ and $\|u_1\|$. Let $\phi(x_n) = (\phi_1(x_n),
 \ldots, \phi_{n-1}(x_n))$.
 Now we have, for some polynomials $p_i, r_i$,
 \begin{align*}
  g(\phi(x_n),x_n) & = g(0,x_n) + \sum_{i=1}^{n-1} \phi_i(x_n) p_i(\phi(x_n),x_n)
  = c_0 x_n^d + O(x_n^{d+1}),\\
  g_{x_n}(\phi(x_n), x_n) & = g_{x_n}(0,x_n)
  + \sum_{i=1}^{n-1} \phi_i(x_n) r_i(\phi(x_n),x_n) = dc_0 x_n^{d-1} + O(x_n^d).
 \end{align*}
 Then the equation $(\ref{eq:ap2})$ gives that
\[
t= x_n + g_{x_n}(\phi(x_n),x_n)g(\phi(x_n),x_n) = x_n +  dc_0^2 x_n^{2d-1} + O(x_n^{2d}),
\]
 and hence
 \[
t=  \tt + dc_0^2 \tt^{2d-1} + O(\tt^{2d}). 
 \]
 Since $\|u_0\| =t,\ \|u_1\|= \tt$, 
 by repetedly applying the argument above, we have
 \[\|u_k\|=  \|u_{k+1}\| + dc_0^2 \|u_{k+1}\|^{2d-1} + O(\|u_{k+1}\|^{2d}).
 \]
 Here, we note that $O(\|u_{k+1}\|^{2d})$ 
 is the same convergent power series for each $k=0,1,\ldots$,
 since we always have equations $(\ref{eq:ap1}),\ (\ref{eq:ap2})$ in each iteration and $\|u_k\|$ is decreasing.
 Therefore Lemma $\ref{lemma:basic}$ implies
 the equality $(\ref{eq:hypersurface})$.
 \end{proof}
\begin{remark}
In general, for a univariate convergent power series $f(x)=cx^d + O(x^{d+1})$ with $c\neq 0$, the lowest degree $d$ of $f$ is called the multiplicity of $f$ at $0$.
\end{remark}
\begin{exmpl}
\label{ex:basic}
 Let $A = \{(x,y,z)\in \R^3:z \geq g(x,y)\}$ for $g(x,y) = x^2 + y^4$
 and $B = \{t(a,b,0) \in \R^3: t \in \R\}$.
 Define $\{u_k\}$ by $u_{k+1} = P_B \circ P_A(u_k)$.
 Then we have 
 \[
  g(\|(a,b)\|^{-1}(a,b)t) = \frac{a^2}{a^2 + b^2}t^2 + \frac{b^4}{(a^2 +
 b^2)^2}t ^4.
 \]
 Let $d$ be the lowest degree of the polynomial above and $c_0$ be its coefficient.
 If $a\neq 0$, then $d = 2$ and $c_0 = \frac{a^2}{a^2 + b^2}$.
 By Theorem \ref{thm:hypersurface}, we have
 $\lim\limits_{k\to\infty}\frac{2a^2}{a^2 + b^2}k^\frac{1}{2}\|u_k\|
 = 1$. On the other hand, if $a = 0$, then
 $d = 4$ and 
 $c_0 = 1$.
 We have
 $\lim\limits_{k\to\infty}\left(24\right)^\frac{1}{6}k^\frac{1}{6}\|u_k\|
 = 1$.
\end{exmpl}
\begin{remark}
We can easily extend Theorem \ref{thm:hypersurface} to the case that $A = \{x \in \R^{n+1}:f(x)\geq 0\}$ where $f \in \R[x]$ is nonsigular at the intersection point $0$.
To see this, let $P=(p_1 \cdots p_{n+1})$ be the orthogonal matrix where
$\{p_1,\dots, p_n\}$ is an orthogonal basis for the tangent plane to $A$ at $0$ which contains $B$, and $p_{n+1}$ is $\|\nabla f(0)\|^{-1}\nabla f(0)$. 
Let $\tf(x) = f(Px)$. 
Then $\nabla \tf(0) = (0,\ldots,0, p_{n+1}^T\nabla f(0))$,  
and nonsingularity of $f$ at $0$ implies $\tf_{x_{n+1}}(0)\neq 0$.
Thus the implicit function theorem $\ref{thm:implicit}$ implies that
there exists a convergent power series $g$ and an open neighborhood $U$ of $0$ such that
$A \cap U = \{(x,z) \in \R^n\times \R: z \geq g(x)\}$.
Then almost identical arguments of the proof hold for a convergent power series $g$. Similarly, we can extend our results in the later sections to a slightly more general set whose defining inequality is written as $f(x)\geq0$ for some $f \in \R[x]$.
However, we keep considering cases that a defining inequality is written as $z \geq g(x)$ for some $g\in \R[x]$, for the sake of simplicity.
\end{remark}

\subsection{Sets Defined by Two Polynomials}
We consider the case that $A, B\subset \R^3$ and that $A$ is defined by two
polynomials and $B$ is a line.
For convex polynomials $f_1,f_2$ and $(a,b)\neq (0,0)$, let
\begin{align*}
 A & = \{(x,y,z) \in \R^3: z \geq f_1(x,y), z\geq f_2(x,y)\},\\
 B & = \{t(a,b,0) \in \R^3: t\in \R\}.
\end{align*}
Suppose that the intersection of $A$ and $xy$-plane is $\{(0,0,0)\}$. We assume 
\[
 C  = \{(x,y,z) \in \R^3: z = f_1(x,y) = f_2(x,y))\}
\]
 is smooth in the sense of Section $\ref{sec:proj_implicit}$.
 In this section, we first obtain the exact convergence rate under the assumption that all points on $B$ which are sufficiently close to the origin are projected to $C$ by $P_A$ (Theorem \ref{thm:rate2poly}). Then we discuss 
 a sufficient condition that the assumption holds (Theorem \ref{thm:cond2poly}).

Let $(\alpha_1,\alpha_2,\alpha_3)$ be a nonzero tangent vector to $C$ at the origin, which is a generating vector of the kernel of the matrix $\left(\begin{smallmatrix}
	     -f_{1,x}(0,0) & -f_{1,y}(0,0) & 1 \\
	     -f_{2,x}(0,0) & -f_{2,y}(0,0) & 1 \\
	    \end{smallmatrix}\right)$. 
	    Since $xy$-plane
 is tangent to $C$ there, we see that $\alpha_3=0$. By Remark $\ref{remark:implicit}$, there exist convergent power series
$\phi_1(s),\phi_2(s),\phi_3(s)$ such that
\[
\phi(s)
 = \begin{pmatrix}
  \phi_1(s) \\
  \phi_2(s) \\
  \phi_3(s)
 \end{pmatrix}
 = \begin{pmatrix}
    \alpha_1 \\
    \alpha_2\\
    0
   \end{pmatrix}s
   + \begin{pmatrix}
      \beta_1\\
      \beta_2\\
      \beta_3
     \end{pmatrix}s^2 + O(s^3),
\]
and $C$ is the image of $\phi$ locally around the origin.

  \begin{thm}
  \label{thm:rate2poly}
   Let $B=\{t(\alpha_1,\alpha_2,0)\in \R^3:t\in \R\}$ and $\{u_k\}$ be the sequence constructed by the alternating projection method $(\ref{eq:AP})$.
   Suppose that $P_A(u_k) \in C$ for all sufficiently large $k$,
   and that $d$ is the lowest degree of 
   \begin{equation}
    \frac{1}{\sqrt{\alpha_1^2 + \alpha_2^2}}\left((\alpha_2\phi_1(s) -
   \alpha_1\phi_2(s))^2 + (\alpha_1^2 + \alpha_2^2)\phi_3(s)^2\right)^{\frac{1}{2}} = c_0s^d + O(s^{d+1}),\label{eq:rate2poly}
   \end{equation}
   for some $c_0>0$ as a power series in $s$.
   Then $\{u_k\}$ converges to $0$ in the exact rate
   $\Theta\left(k^{\frac{-1}{2d-2}}\right)$.
   Moreover, 
       \begin{equation}
\lim_{k\to \infty}\left(\frac{(2d-2)dc_0^2}{(\alpha_1^2 + \alpha_2^2)^d}\right)^{\frac{1}{2d-2}}k^{\frac{1}{2d-2}}\|u_k\|
 = 1.
 \label{eq:rate2poly2}
\end{equation}
  \end{thm}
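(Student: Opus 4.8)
The plan is to reproduce the scheme of the proof of Theorem~\ref{thm:hypersurface}: reduce the alternating-projection dynamics to a scalar recursion of the type covered by Lemma~\ref{lemma:basic}, with the curve $C$ now in the role played there by the hypersurface $\{z=g(x)\}$. Throughout write $v=(\alpha_1,\alpha_2,0)$, so that $B=\{tv:t\in\R\}$ and $\|v\|^2=\alpha_1^2+\alpha_2^2$, and recall that $u_k\to 0$ (hence, $P_A$ being nonexpansive, $P_A(u_k)\to 0$).

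The first step is a reduction: since $C\subseteq A$, whenever $b\in B$ satisfies $P_A(b)\in C$ the point $P_A(b)$ is also \emph{the} nearest point of $C$ to $b$. Indeed $d(b,A)\le d(b,C)$ trivially, while $P_A(b)\in C$ gives $d(b,C)\le\|b-P_A(b)\|=d(b,A)$; since any point of $C$ realizing this distance lies in $A$ and realizes $d(b,A)$, uniqueness of the projection onto the convex set $A$ forces it to equal $P_A(b)$. By hypothesis this applies to $u_k$ for all large $k$, so the iteration reads $u_{k+1}=P_B(P_C(u_k))$ and, since $P_C(u_k)=P_A(u_k)\to 0$, the nearest point lies on the parametrized piece $\phi$ of $C$ for $k$ large. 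I would then reparametrize: the coefficient of $s$ in $\langle\phi(s),v\rangle$ is $\langle(\alpha_1,\alpha_2,0),v\rangle=\|v\|^2\ne 0$, so $\tau(s):=\langle\phi(s),v\rangle/\|v\|^2=s+O(s^2)$ is an analytic change of parameter; set $\psi(\tau):=\phi(s(\tau))$, so that $P_B(\psi(\tau))=\tau v$ and $\psi(\tau)=\tau v+\rho(\tau)$ with $\langle\rho(\tau),v\rangle\equiv 0$. By the Lagrange identity $\|p\times v\|^2=\|p\|^2\|v\|^2-\langle p,v\rangle^2$ the left-hand side of \eqref{eq:rate2poly} equals $\|\phi(s)\times v\|/\|v\|=\dist(\phi(s),B)=\|\rho(\tau)\|$; hence \eqref{eq:rate2poly} together with $s=\tau+O(\tau^2)$ (and $d\ge 2$, forced by nontransversality) yields $\|\rho(\tau)\|=c_0\tau^d+O(\tau^{d+1})$, so $\|\rho(\tau)\|^2=c_0^2\tau^{2d}+O(\tau^{2d+1})$.

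Now set up the recursion. Writing $u_k=t_kv$, for $k$ large we have $P_C(u_k)=\psi(\tau_k)$ for a unique $\tau_k\to 0$, and then $u_{k+1}=P_B(\psi(\tau_k))=\tau_kv$, so $t_{k+1}=\tau_k$, $\|u_k\|=|t_k|\,\|v\|$ and $\|u_{k+1}\|=|\tau_k|\,\|v\|$. Since $\tau_k$ is an interior minimizer of $\tau\mapsto\|\psi(\tau)-t_kv\|^2$, the first-order condition $\langle\psi'(\tau_k),\psi(\tau_k)-t_kv\rangle=0$ holds; expanding with $\psi=\tau v+\rho$ and using $\langle\rho(\tau),v\rangle\equiv 0$ (hence $\langle\rho'(\tau),v\rangle\equiv 0$) all cross terms vanish and it collapses to $\|v\|^2(\tau_k-t_k)+\tfrac12\frac{d}{d\tau}\|\rho(\tau)\|^2\big|_{\tau=\tau_k}=0$, i.e.
\[
t_k=\tau_k+\frac{dc_0^2}{\|v\|^2}\,\tau_k^{2d-1}+O(\tau_k^{2d}),
\]
where the remainder is a fixed convergent power series, the same for every $k$ (it comes from the fixed data $C,B$). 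Substituting $\tau_k=t_{k+1}$ gives $t_k=t_{k+1}+\frac{dc_0^2}{\|v\|^2}t_{k+1}^{2d-1}+O(t_{k+1}^{2d})$. For $k$ large $t_k\ne 0$ and, since $t_k=t_{k+1}\bigl(1+\frac{dc_0^2}{\|v\|^2}t_{k+1}^{2d-2}+O(t_{k+1}^{2d-1})\bigr)$ with the bracket tending to $1$, $\mathrm{sgn}(t_k)$ is eventually constant; using that $2d-2$ is even to write $t_{k+1}^{2d-2}=|t_{k+1}|^{2d-2}$, it follows that $x_k:=\|u_k\|=|t_k|\,\|v\|$ satisfies, for $k$ large, $x_k=x_{k+1}\bigl(1+\frac{dc_0^2}{\|v\|^{2d}}x_{k+1}^{2d-2}+x_{k+1}^{2d-1}h(x_{k+1})\bigr)$ for a convergent power series $h$. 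Re-indexing so this holds from $k=0$ (which does not affect $\lim_k k^{1/(2d-2)}x_k$), Lemma~\ref{lemma:basic} with $q=2d-2$ and $C=dc_0^2/\|v\|^{2d}=dc_0^2/(\alpha_1^2+\alpha_2^2)^d$ gives $\lim_k(qC)^{1/q}k^{1/q}\|u_k\|=1$, which is exactly \eqref{eq:rate2poly2}; the exact rate $\Theta(k^{-1/(2d-2)})$ then follows as in the remark after Corollary~\ref{cor:basic}.

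The conceptual steps here are short: the reduction $P_A=P_C$ on iterates landing on $C$, and the reparametrization identifying the left-hand side of \eqref{eq:rate2poly} with $\dist(\phi(s),B)$. The substantive computation is the first-order condition, which is pleasantly clean precisely because $\rho(\tau)\perp v$ annihilates every mixed term. The only genuinely fiddly points I expect are the sign bookkeeping needed to convert the scalar recursion in $t_k,\tau_k$ into a recursion in the norms $\|u_k\|$ (this is where evenness of $q=2d-2$ is used), and verifying that the remainder power series in the $t$-recursion is the same at every iteration so that Lemma~\ref{lemma:basic} applies verbatim.
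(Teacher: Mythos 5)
Your proof is correct and follows essentially the same route as the paper: identify the left-hand side of \eqref{eq:rate2poly} with $\dist(\phi(s),B)$, pass via orthogonality of the projection residual to the tangent of $C$ to a scalar recursion of the form covered by Lemma~\ref{lemma:basic}, and read off the rate. The paper achieves this by rotating so the tangent vector is $(1,0,0)$ and writing $C$ as $\psi(x)=(x,\psi_2(x),\psi_3(x))$, whereas you use the coordinate-free decomposition $\psi(\tau)=\tau v+\rho(\tau)$ with $\rho\perp v$ — these are equivalent; your explicit justification that $P_A$ agrees with the nearest-point map onto $C$ on the relevant iterates, and the sign bookkeeping, make explicit steps the paper leaves tacit.
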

   \begin{proof}
    First, we calculate $d(\phi(s),B)$. Let $t(\alpha_1,\alpha_2,0) =
    P_B(\phi(s))$.
    By the property of the projection, we have
\begin{align}
  &   \begin{pmatrix}
     \alpha_1 \\
     \alpha_2\\
     0
   \end{pmatrix}\cdot \phi(s)
   = \begin{pmatrix}
     \alpha_1 \\
     \alpha_2\\
     0
   \end{pmatrix}\cdot t\begin{pmatrix}
       \alpha_1\\
       \alpha_2\\
       0
		       \end{pmatrix},
 \label{eq:innerprod1}\\
    & t = \frac{1}{\alpha_1^2 + \alpha_2^2}(\alpha_1\phi_1(s) +
    \alpha_2\phi_2(s)).\notag 
\end{align}   
Thus we obtain
\begin{multline*}
     d(\phi(s), B)^2 = \|(\phi_1(s), \phi_2(s), \phi_3(s)) -
 t(\alpha_1,\alpha_2,0)\|^2 \\
 = \frac{1}{\alpha_1^2 + \alpha_2^2}
 \left( (\alpha_2 \phi_1(s) - \alpha_1 \phi_2(s))^2 + (\alpha_1^2 + \alpha_2^2)\phi_3(s)^2\right).
\end{multline*}
 By the equation $\eqref{eq:rate2poly}$, we see $d(\phi(s),B)=c_0 s^d + O(s^{d+1})$.
    
 Next, we  rotate
 the curve $C$ about $z$-axis and reparametrize its parameter by a nonzero scalar multiple
 so that
 $(\alpha_1,\alpha_2,0)=(1,0,0)$. 
  Then the curve $C$ is represented by
 $\psi(x)=(x,\psi_2(x), \psi_3(x))$ for some convergent power series $\psi_2,\psi_3$.
   Suppose that $Q\overset{P_A}{\longmapsto} R:=\psi(x)
   \overset{P_B}{\longmapsto} S$ is written as
   \[
   T\begin{pmatrix}
     1 \\
     0 \\
     0
   \end{pmatrix}\overset{P_A}{\longmapsto}
   \begin{pmatrix}
    x \\
    \psi_2(x) \\
    \psi_3(x)
   \end{pmatrix}\overset{P_B}{\longmapsto}
   t\begin{pmatrix}
     1 \\
     0 \\
     0
    \end{pmatrix},
   \]
   for some $x, t, T\in \R$.
   By applying equation $(\ref{eq:innerprod1})$,
   we have 
    $x = t$. Since $R$ is the projection of $Q$ onto
   $C$, we see that $\overrightarrow{RQ}$ is orthogonal to $C$.
   Thus we have
\begin{align}
 &   \left(
   \begin{pmatrix}
    x \\
    \psi_2(x) \\
    \psi_3(x)    
   \end{pmatrix}
   -T\begin{pmatrix}
     1\\
     0\\
     0
    \end{pmatrix}
   \right)\cdot
   \begin{pmatrix}
    1\\
    \psi'_2(x)\\
    \psi'_3(x)
 \end{pmatrix}= 0,\notag \\
 & t + \psi_2(t)\psi'_2(t) + \psi_3(t)\psi'_3(t) = T.
 \label{eq:innerprod2}
\end{align}
    Here, let $h(x)=\sqrt{\psi_2(x)^2 + \psi_3(x)^2}$.
    Since $P_B(\psi(x)) = (x,0,0)$, we have
    $d(\psi(x), B) = d(\psi(x),(x,0,0)) = h(x)$.
    By comparing the speed of the parametric curve $\phi(s)$ with that of $\psi(x)$, we see that 
    $
    d(\psi(x), B) = cx^d + O(x^{d+1})
    $, where $c=\frac{c_0}{(\alpha_1^2 + \alpha_2^2)^\frac{d}{2}}$.
   Now we have
   \[
 \frac{d}{dx}\left(\frac{1}{2}h(x)^2\right) = \psi_2(x)\psi'_2(x) + \psi_3(x)\psi'_3(x).
   \]
   Thus $\psi_2(t)\psi'_2(t) + \psi_3(t)\psi'_3(t)=dc^2t^{2d-1} +
   O(t^{2d})$.
   Let $u_k$ and $u_{k+1}$ be the coordinate vectors of $Q$ and $S$, respectively. Then the equation
    $(\ref{eq:innerprod2})$ can be written as
   \[
    \|u_{k+1}\| + dc^2 \|u_{k+1}\|^{2d-1} + O(\|u_{k+1}\|^{2d}) = \|u_k\|.
   \]
   Therefore, Lemma \ref{lemma:basic} implies the equality $(\ref{eq:rate2poly2})$.
   \end{proof}
\begin{exmpl}
\label{ex:region1}
 Let $A = \{(x,y,z)\in \R^3:z\geq f_1(x,y),\ z\geq f_2(x,y)\}$, where
 \[
  \begin{cases}
   f_1(x,y) = x^2 + y^4,\\
   f_2(x,y) = (x - 1)^2 + (y - 1)^4 - 2.
  \end{cases}
 \]
  The tangent line to the curve $C$ at the origin is given by
  $B = \{t(-2,1,0)\in \R^3:t \in \R\}$.
 The curve $C$ is written as 
 \[
 \phi(y) 
 = \begin{pmatrix}
    -2y +3y^2 - 2y^3\\
    y\\
    4y^2 - 12y^3 + 18y^4 - 12y^5 + 4y^6
    \end{pmatrix}
 \]
  By Theorem \ref{thm:cond2poly} below, we can easily check that
 $P_A(t(-2,1,0))\in C$ for all sufficiently small $t>0$; see Example \ref{ex:region2}. Now the equation $\eqref{eq:rate2poly}$ is $\sqrt{\frac{89}{5}}y^2 + O(y^3)$.
 Thus, for $u_{k+1} = P_B\circ P_A(u_k)$, Theorem \ref{thm:rate2poly} implies
       \[ 
\lim_{k\to \infty}\sqrt{\frac{356}{125}}k^{\frac{1}{2}}\|u_k\|
 = 1,
\] 
and hence $\|u_k\| = \Theta(k^{-\frac{1}{2}})$.
\end{exmpl}

  \begin{thm}
     \label{thm:cond2poly}
     Suppose that $(a,b,0)$, $\nabla(z -
   f_1)(0,0,0)$, $\nabla(z - f_2)(0,0,0)$ are linearly independent.
     \begin{enumerate}
    \item If $(a,b,0) = c(\alpha_1,\alpha_2,0)$ for some $c\neq
    0$ and the solution $(\lambda_1, \lambda_2, \mu)$ to the system
  \begin{equation}
   \begin{pmatrix}
    \beta_1\\
    \beta_2\\
    \beta_3
   \end{pmatrix}
  =\lambda_1\begin{pmatrix}
	     -f_{1x}(0,0)\\
	     -f_{1y}(0,0)\\
	     1
	    \end{pmatrix}
  +\lambda_2\begin{pmatrix}
	     -f_{2x}(0,0)\\
	     -f_{2y}(0,0)\\
	     1
	    \end{pmatrix}
  + \mu\begin{pmatrix}
	a\\
	b\\
	0
       \end{pmatrix}.
       \label{eq:cond2poly}
  \end{equation}
    satisfies $\lambda_1,\lambda_2>0$, then
    $P_A(p) \in C$ for each point $p$ in $B$ close to the origin.
   \item If $(a,b,0) \neq c(\alpha_1,\alpha_2,0)$ for any $c\neq 0$, or
   the solution $(\lambda_1,\lambda_2,\mu)$ to $(\ref{eq:cond2poly})$ 
   satisfies
   $\lambda_1\lambda_2<0$, then there exists $\epsilon>0$
  such that $P_A(p) \notin C$ for any point $p$ in $B\cap \epsilon\B\setminus\{0\}$.
   \end{enumerate}
  \end{thm}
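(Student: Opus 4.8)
The plan is to characterize, for every point $p$ on $B$ near the origin, the possible configurations under which $P_A(p)$ lands on $C$, by turning the optimality conditions of Lemma~\ref{lemma:projection} into a single $3\times 3$ linear system that depends analytically on the parameter $s$ of the curve $\phi$. First I would write the defining inequalities of $A$ as $g_1(x,y,z):=z-f_1(x,y)\ge 0$ and $g_2(x,y,z):=z-f_2(x,y)\ge 0$, and abbreviate $w:=(\alpha_1,\alpha_2,0)$. On $C$ both constraints are active, so by Lemma~\ref{lemma:projection} a point $\phi(s)$ of $C$ equals $P_A(p)$ for $p=t(a,b,0)\in B$ exactly when
\[
  c_1\,\nabla g_1(\phi(s)) + c_2\,\nabla g_2(\phi(s)) + t\,(a,b,0) = \phi(s)
\]
for some $c_1,c_2\ge 0$. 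For fixed $s$ this is linear in $(c_1,c_2,t)$ with coefficient matrix $M(s):=\bigl[\,\nabla g_1(\phi(s))\ \ \nabla g_2(\phi(s))\ \ (a,b,0)\,\bigr]$, and $M(0)=\bigl[\,\nabla(z-f_1)(0)\ \ \nabla(z-f_2)(0)\ \ (a,b,0)\,\bigr]$ is invertible by hypothesis. Hence $M(s)$ is invertible for small $s$ and $(c_1(s),c_2(s),t(s)):=M(s)^{-1}\phi(s)$ is a triple of convergent power series vanishing at $s=0$. Since $P_A$ is nonexpansive with $P_A(0)=0$, any $P_A(p)$ with $p$ near $0$ lies near $0$, hence on the injective local image of $\phi$; combining this with uniqueness of the solution of the linear system shows that, for $p$ small, $P_A(p)\in C$ if and only if $p=t(s)(a,b,0)$ for some small $s$ with $c_1(s)\ge 0$ and $c_2(s)\ge 0$.

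Next I would compute the leading terms of $(c_1,c_2,t)$. Differentiating at $s=0$ gives $(c_1'(0),c_2'(0),t'(0))=M(0)^{-1}\phi'(0)=M(0)^{-1}w$. Because $w$ is tangent to $C$ it is orthogonal to both $\nabla(z-f_i)(0)$, so writing $(a,b,0)=p_1\nabla(z-f_1)(0)+p_2\nabla(z-f_2)(0)+p_3 w$ and comparing the \emph{third} coordinates forces $p_1+p_2=0$; moreover $p_3\ne 0$, since $p_3=0$ would put $(a,b,0)$ in $\Span\{\nabla(z-f_1)(0),\nabla(z-f_2)(0)\}$, contradicting linear independence. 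Solving $M(0)(c_1',c_2',t')=w$ then gives $t'(0)=1/p_3\ne 0$ and $c_i'(0)=-p_i/p_3$; in particular $s\mapsto t(s)$ is a local analytic diffeomorphism, so $\{t(s)(a,b,0):|s|<\delta\}$ is a neighbourhood of $0$ in $B$ and each small nonzero $p\in B$ corresponds to a unique small $s\ne 0$. When $(a,b,0)$ is parallel to $w$ we have $p_1=p_2=0$, hence $c_1'(0)=c_2'(0)=0$; matching the $s^2$-coefficients in $M(s)(c_1(s),c_2(s),t(s))=\phi(s)$, using that the third column of $\tfrac{d}{ds}M$ vanishes, gives that the $s^2$-coefficients $(\gamma_1,\gamma_2)$ of $(c_1,c_2)$ and the $s^2$-coefficient $\mu$ of $t$ satisfy $\gamma_1\nabla(z-f_1)(0)+\gamma_2\nabla(z-f_2)(0)+\mu(a,b,0)=(\beta_1,\beta_2,\beta_3)$, which is exactly \eqref{eq:cond2poly}; hence $c_i(s)=\lambda_i s^2+O(s^3)$.

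The conclusion would then be read off directly. If $(a,b,0)=c\,w$ and $\lambda_1,\lambda_2>0$, then $c_1(s),c_2(s)>0$ for all small $s\ne 0$ and $P_A(0)=0\in C$, so every $p\in B$ near the origin projects onto $C$; this is (i). If $(a,b,0)$ is not parallel to $w$, then $p_1=-p_2\ne 0$, so $c_1'(0)$ and $c_2'(0)$ are nonzero with opposite signs, whence for every small $s\ne 0$ one of $c_1(s),c_2(s)$ is negative; and if $(a,b,0)=c\,w$ but $\lambda_1\lambda_2<0$, then $c_1(s),c_2(s)=\lambda_1 s^2+\cdots,\ \lambda_2 s^2+\cdots$ again have opposite signs for small $s\ne 0$. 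In both situations no small $p\ne 0$ on $B$ admits a representation with $c_1,c_2\ge 0$, so $P_A(p)\notin C$ for $p\in B\cap\epsilon\B\setminus\{0\}$ with $\epsilon$ small; this is (ii).

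The step I expect to be the main obstacle is the reduction itself: one must be careful that the power-series solution $(c_1(s),c_2(s),t(s))$ accounts for \emph{every} way a near-origin $p$ could project onto $C$, so that the sign conditions on $c_1(s),c_2(s)$ are genuinely necessary, not just sufficient; this rests on the invertibility of $M(0)$ together with $t'(0)\ne 0$. The other delicate point, which is exactly what makes the non-parallel case of (ii) go through without any extra hypothesis, is extracting the identity $p_1+p_2=0$ from the third coordinate — a feature special to this geometry, where $B$ lies in the common tangent plane $\{z=0\}$ of the two surfaces at the intersection point.
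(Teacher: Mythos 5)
Your proposal is correct and follows essentially the same route as the paper: both set up the KKT conditions of Lemma~\ref{lemma:projection} as a $3\times 3$ linear system along the parametrized curve $\phi(s)$, invert it using the linear independence hypothesis, and read off the signs of the leading coefficients of the multipliers ($\lambda_i s^2$ in the tangent case, $d_i s$ with $d_1+d_2=0$ in the non-tangent case). The only difference is that you spell out more explicitly the necessity direction --- that every near-origin projection onto $C$ must arise from this power-series parametrization, via nonexpansiveness of $P_A$ and uniqueness of the solution to the linear system --- which the paper leaves implicit.
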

  \begin{proof}
   Suppose that $(a,b,0)=c(\alpha_1, \alpha_2,0)$ for some $c\neq 0$
   and $(\beta_1,\beta_2,\beta_3)$ is written as $(\ref{eq:cond2poly})$.
   We will apply Lemma \ref{lemma:projection} to show that
   for each $t$ sufficiently close to $0$,
   there exist $s$ such that $P_A(t(a,b,0)) = (\phi_1(s),\phi_2(s),\phi_3(s))$.
  The equation in the condition of Lemma \ref{lemma:projection} can be written as
  \begin{align}
   &
   c_1\begin{pmatrix}
       -f_{1x}\\
       -f_{1y}\\
       1
      \end{pmatrix}
   + c_2\begin{pmatrix}
	 -f_{2x}\\
	 -f_{2y}\\
	 1
	\end{pmatrix}
   =\begin{pmatrix}
     \phi_1(s)\\
     \phi_2(s)\\
     \phi_3(s)
    \end{pmatrix}
   - t\begin{pmatrix}
      a\\
      b\\
      0
   \end{pmatrix},\notag\\
   &
   \begin{pmatrix}
    -f_{1x} & -f_{2x} & a \\
    -f_{1y} & -f_{2y} & b \\
    1 & 1 & 0
   \end{pmatrix}
   \begin{pmatrix}
    c_1 \\
    c_2 \\
    t
   \end{pmatrix}
   =\begin{pmatrix}
     \phi_1(s) \\
     \phi_2(s)\\
     \phi_3(s)
    \end{pmatrix},\label{eq:cond2poly2}
  \end{align}
   where $f_{ix} = f_{ix}(\phi(s)),\ f_{iy} =
   f_{iy}(\phi(s))$.
   We put 
   \[
      M =
   \begin{pmatrix}
    -f_{1x} & -f_{2x} & a \\
    -f_{1y} & -f_{2y} & b \\
    1 & 1 & 0
   \end{pmatrix},\quad   
   M_0 =
   \begin{pmatrix}
    -f_{1x}(0,0) & -f_{2x}(0,0) & a \\
    -f_{1y}(0,0) & -f_{2y}(0,0) & b \\
    1 & 1 & 0
   \end{pmatrix}.   
   \]
   Since the column vectors of $M_0$ are linearly independent,
   the linear equations \eqref{eq:cond2poly2} have a solution for $s$ close to $0$, and
   we have
 \begin{align}
  t & = \frac{1}{|M|}
  \begin{vmatrix}
    -f_{1x} & -f_{2x} & \phi_1(s) \\
    -f_{1y} & -f_{2y} & \phi_2(s) \\
    1 & 1 & \phi_3(s)   
  \end{vmatrix} \notag
  \\
  & = \frac{1}{|M_0|}
  \begin{vmatrix}
    -f_{1x}(0,0) & -f_{2x}(0,0) & \alpha_1 \\
    -f_{1y}(0,0) & -f_{2y}(0,0) & \alpha_2 \\
    1 & 1 & 0   
  \end{vmatrix}s + O(s^2) = c^{-1}s + O(s^2),\label{eq:cond2poly3}
  \\
      c_1 & = \frac{1}{|M|}
    \begin{vmatrix}
     \phi_1(s) & -f_{2x} & a \\
     \phi_2(s) & -f_{2y} & b \\
     \phi_3(s) & 1 & 0
    \end{vmatrix} \notag
    \\ 
 & = \frac{1}{|M|}
     \begin{vmatrix}
     \alpha_1 & -f_{2x} & a \\
     \alpha_2 & -f_{2y} & b \\
     0 & 1 & 0
     \end{vmatrix}s
 + \frac{1}{|M|}
     \begin{vmatrix}
     \beta_1 & -f_{2x} & a \\
     \beta_2 & -f_{2y} & b \\
     \beta_3 & 1 & 0
     \end{vmatrix}s^2 + O(s^3).\notag
 \end{align}
   Since the first term of $c_1$ is $0$, we have
   \[
    c_1 =  \frac{1}{|M_0|}
     \begin{vmatrix}
     \beta_1 & -f_{2x}(0,0) & a \\
     \beta_2 & -f_{2y}(0,0) & b \\
     \beta_3 & 1 & 0
     \end{vmatrix}s^2 + O(s^3).
   \]
   By the condition $(\ref{eq:cond2poly})$, we have
   $c_1 = \lambda_1 s^2 + O(s^3)$. Similarly, we have
   $c_2 = \lambda_2 s^2 + O(s^3)$.
   Thus, if $\lambda_1,\lambda_2>0$, then, for $t$ sufficiently close to $0$, there exists
   $s$ such that $(\ref{eq:cond2poly3})$ holds and $c_1,\ c_2>0 $.
   Therefore, Lemma \ref{lemma:projection} ensures that $P_A(t(a,b,0))\in
   C$. If $\lambda_1$ and $\lambda_2$ have distinct signs, then
   so do $c_1$ and $c_2$, and hence
   $P_A(t(a,b,0))\notin C$.

   Lastly, we show the case that
   $(a,b,0)\neq c(\alpha_1,\alpha_2,0)$ for any $c\neq 0$.
   If we write
   \[
   \begin{pmatrix}
    \alpha_1\\
    \alpha_2\\
    0
   \end{pmatrix}
  =d_1\begin{pmatrix}
	     -f_{1x}(0,0)\\
	     -f_{1y}(0,0)\\
	     1
	    \end{pmatrix}
  + d_2\begin{pmatrix}
	     -f_{2x}(0,0)\\
	     -f_{2y}(0,0)\\
	     1
	    \end{pmatrix}
  + \mu\begin{pmatrix}
	a\\
	b\\
	0
       \end{pmatrix},
   \]
   then $d_1$ and $d_2$ have distinct signs.
   Now
   \[
    c_1 =  \frac{1}{|M_0|}
     \begin{vmatrix}
     \alpha_1 & -f_{2x}(0,0) & a \\
     \alpha_2 & -f_{2y}(0,0) & b \\
     0 & 1 & 0
     \end{vmatrix}s + O(s^2)
   = d_1 s + O(s^2).
   \]
   Similarly we have $c_2 = d_2 s + O(s^2)$.
   Thus, for $t$ sufficiently close to $0$, we see that $c_1$ and $c_2$ have distinct signs.
   Therefore, Lemma \ref{lemma:projection} ensures the result.
  \end{proof}
  
     \begin{exmpl}
     \label{ex:region2}
    We consider $A=\{(x,y,z)\in \R^3:z \geq f_1(x,y),\ z \geq
    f_2(x,y)\}$, $B
    = \Span\{\nabla(z - f_1)(0,0,0), \nabla(z - f_2)(0,0,0)\}^\perp$ and $C = \{(x,y,z)\in \R^3:z =
    f_1(x,y)= f_2(x,y)\}$.
    
   \begin{enumerate}
    \item Let
	  \[
	   \begin{cases}
	    f_1(x,y) = x^2 + y^4,\\
	    f_2(x,y) = (x - 1)^2 + (y - 1)^4 - 2.
	   \end{cases}
	  \]
	  Then $\nabla (z-f_1)(0,0,0) = (0, 0, 1),\ \nabla(z -
	  f_2)(0,0,0) = (2, 4, 1)$, and $B = \{t(-2,1,0):t\in \R\}$. The curve $C$ is written as
	  \[
	   \begin{pmatrix}
	    x\\
	    y\\
	    z
	   \end{pmatrix}
	  =\begin{pmatrix}
	    -2y + 3y^2 - 2y^3\\
	    y \\
	    4y^2 - 12y^3 + 18y^4 - 12y^5 + 4y^6
	   \end{pmatrix}
	  = \begin{pmatrix}
	     -2 \\
	     1 \\
	     0
	    \end{pmatrix}y
	  + \begin{pmatrix}
	     3 \\
	     0 \\
	     4
	    \end{pmatrix} y^2 + O(y^3).
	  \]
   Now we have
   \[
    \begin{pmatrix}
     3 \\
     0 \\
     4
    \end{pmatrix}
   = \frac{37}{10}\begin{pmatrix}
      0 \\
      0 \\
      1
      \end{pmatrix}
   + \frac{3}{10} \begin{pmatrix}
	2\\
	4\\
	1
       \end{pmatrix}
   - \frac{6}{5}\begin{pmatrix}
       -2 \\
       1 \\
       0
      \end{pmatrix}.
   \]
   Then Theorem \ref{thm:cond2poly} guarantees that $P_A(t(-2,1,0))\in C$ for all
    sufficiently small $t\geq0$.

    \item Let
    \[
     \begin{cases}
      f_1(x,y) = x^2 + y^4,\\
      f_2(x,y) = \left(x + \frac{1}{2}\right)^2 + \left(y +
      \frac{1}{2}\right)^4 - \frac{5}{16}.
     \end{cases}
    \]
	  Then $\nabla (z-f_1)(0,0,0) = (0, 0, 1),\ \nabla(z -
	  f_2)(0,0,0) = (-1, -1/2, 1)$, $B = \{t(1,-2,0):t\in \R\}$. The curve $C$ is written as
	  \[
	   \begin{pmatrix}
	    x\\
	    y\\
	    z
	   \end{pmatrix}
	  =\begin{pmatrix}
	    -\frac{1}{2}y - \frac{3}{2}y^2 - 2y^3\\
	    y \\
	    \frac{1}{4}y^2 + \frac{3}{2}y^3 + \frac{21}{4}y^4 + 6y^5 + 4y^6
	   \end{pmatrix}
	  = \begin{pmatrix}
	     -\frac{1}{2} \\
	     1 \\
	     0
	    \end{pmatrix}y
	  + \begin{pmatrix}
	     -\frac{3}{2} \\
	     0 \\
	     \frac{1}{4}
	    \end{pmatrix} y^2 + O(y^3).
	  \]
   Now we have
   \[
    \begin{pmatrix}
     -\frac{3}{2} \\
     0 \\
     \frac{1}{4}
    \end{pmatrix}
   = -\frac{19}{20}\begin{pmatrix}
      0 \\
      0 \\
      1
      \end{pmatrix}
   + \frac{6}{5} \begin{pmatrix}
	-1\\
	-\frac{1}{2}\\
	1
       \end{pmatrix}
   -\frac{3}{10}\begin{pmatrix}
       1 \\
       -2 \\
       0
      \end{pmatrix}.
   \]
   By Theorem \ref{thm:cond2poly}, there exists $\epsilon
	  >0$ such that $P_A(t(1,-2,0))\notin C$ for 
    any $0 < t < \epsilon$.
	  
   \end{enumerate}    
   \end{exmpl}

\subsection{The Partition of the Region}
The boundary of $A$ consists of subsets of two surfaces
$\bA_1 = \{(x,y,z)\in \R^3:z = f_1(x,y) > f_2(x,y)\}$,
$\bA_2 = \{(x,y,z)\in \R^3:z = f_2(x,y) > f_1(x,y)\}$, and the curve $C =
\{(x,y,z)\in \R^3:z = f_1(x,y)=f_2(x,y)\}$.
  The $xy$-plane can be partitioned so that we can determine 
  to which part of $\partial A$ a point is mapped by $P_A$, as in Theorem $\ref{thm:region}$.
   We write $\{f_i - f_j * 0\} = \{(x,y)\in \R^2:f_i(x,y) - f_j(x,y)*0\}$
 for $i,j \in \{1,2\}$, where the symbol $*$ is $>,\ \geq$ or $=$.
 Let $\Psi_i:\R^2 \to \R^2$ be defined by
\[
  \Psi_i:\begin{pmatrix}
	x\\
	y
       \end{pmatrix}
 \mapsto
 \begin{pmatrix}
  x + f_{ix}(x,y)f_i(x,y)\\
  y + f_{iy}(x,y)f_i(x,y)
 \end{pmatrix},
\]
and $A_i=\{(x,y,z)\in \R^3:z \geq f_i(x,y)\}$ for $i = 1,2$.

   \begin{thm}
   \label{thm:region}
\begin{enumerate}
  \item   $P_A\circ\Psi_1(\{f_1 - f_2>0\})\subset \bA_1$.
 \item   $P_A\circ\Psi_2(\{f_2 - f_1>0\})\subset \bA_2$.
 \item   $P_A(\Psi_1(\{f_2 - f_1 \geq 0\})\cap \Psi_2(\{f_1 -
   f_2 \geq 0\})) \subset C$.
\end{enumerate}
 \end{thm}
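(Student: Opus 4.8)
The plan is to use that $A=A_1\cap A_2$ and to read $\Psi_i$ off Lemma \ref{lemma:APsystem}: that lemma says precisely that $\Psi_i$ is the passage from a boundary point $(x,y,f_i(x,y))$ of $A_i$ to the point of the $xy$-plane that $A_i$-projects onto it. I would first record three structural facts. (a) Since $\max(f_1,f_2)$ is convex and continuous, $\partial A$ is its graph $\{z=\max(f_1,f_2)\}$, which is the disjoint union $\bA_1\sqcup\bA_2\sqcup C$; and $P_A(p)\in\partial A$ whenever $p\notin A$. (b) Writing $g_i(x,y,z)=z-f_i(x,y)$, one has $\nabla g_i=(-\nabla f_i,1)\neq0$, and the linear-independence hypothesis of Lemma \ref{lemma:projection} holds for $A$ at every point of $\bA_1$ and of $\bA_2$ automatically (only one active constraint), and at points of $C$ it is exactly the assumed smoothness of $C$. (c) On a neighbourhood of the origin $\Psi_i$ is a diffeomorphism — its Jacobian at $0$ is $I+\nabla f_i(0)\,\nabla f_i(0)^{\mathrm T}$, which is positive definite — hence injective there. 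The strategy is then to verify the optimality system \eqref{eq:projection} \emph{directly} at an explicit candidate point for (i) and (ii), and to argue by contradiction for (iii).

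For (i), let $(x_0,y_0)\in\{f_1-f_2>0\}$ and set $w=\Psi_1(x_0,y_0)$, $u=(x_0,y_0,f_1(x_0,y_0))$. Since $f_1$ and $f_2$ agree at the origin, $(x_0,y_0)\neq(0,0)$, and the standing hypothesis that $A$ meets the $xy$-plane $\{z=0\}$ only at $(0,0,0)$ forces $f_1(x_0,y_0)=\max(f_1,f_2)(x_0,y_0)>0$. A one-line computation from the definition of $\Psi_1$ gives $u-(w,0)=f_1(x_0,y_0)\,(-\nabla f_1(x_0,y_0),1)=f_1(x_0,y_0)\,\nabla g_1(u)$, while $g_1(u)=0$ and $g_2(u)=f_1(x_0,y_0)-f_2(x_0,y_0)>0$. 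Hence the multipliers $c_1=f_1(x_0,y_0)>0$ and $c_2=0$ satisfy \eqref{eq:projection} for $A$ at $u$, so by Lemma \ref{lemma:projection}, $P_A(w,0)=u\in\bA_1$. Part (ii) is identical with the roles of the indices $1$ and $2$ exchanged.

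For (iii), let $w\in\Psi_1(\{f_2-f_1\ge0\})\cap\Psi_2(\{f_1-f_2\ge0\})$, say $w=\Psi_1(x_1,y_1)$ with $f_2(x_1,y_1)\ge f_1(x_1,y_1)$ and $w=\Psi_2(x_2,y_2)$ with $f_1(x_2,y_2)\ge f_2(x_2,y_2)$. If $(w,0)\in A$ then $\max(f_1,f_2)(w)\le0$, so $w=0$ and $P_A(w,0)=0\in C$. Otherwise $u:=P_A(w,0)\in\partial A=\bA_1\sqcup\bA_2\sqcup C$, and it suffices to exclude $u\in\bA_1$ (the case $u\in\bA_2$ being symmetric). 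So suppose $u=(\xi,\eta,f_1(\xi,\eta))$ with $f_1(\xi,\eta)>f_2(\xi,\eta)$. Since $g_2(u)>0$, the complementarity in \eqref{eq:projection} forces the multiplier of $g_2$ to vanish, whence $u-(w,0)=c_1\nabla g_1(u)$ with $c_1\ge0$; the $z$-component gives $c_1=f_1(\xi,\eta)$, and the $xy$-components then give $w=(\xi,\eta)+f_1(\xi,\eta)\nabla f_1(\xi,\eta)=\Psi_1(\xi,\eta)$. By injectivity of $\Psi_1$ near the origin, $(\xi,\eta)=(x_1,y_1)$, so $f_1(x_1,y_1)>f_2(x_1,y_1)$, contradicting $f_2(x_1,y_1)\ge f_1(x_1,y_1)$. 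Therefore $u\notin\bA_1$, symmetrically $u\notin\bA_2$, and $u\in C$.

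The main obstacle is the bookkeeping in part (iii): one must be sure that $P_A(w,0)$ is a boundary point falling in the claimed trichotomy, that the optimality system for $A$ can be ``localized'' at a point of $\bA_1$ — this is where the structure $A=A_1\cap A_2$ and the smoothness of $C$ enter — and, most delicately, that the preimage $(\xi,\eta)$ produced by the optimality system coincides with the given preimage $(x_1,y_1)$. This last point uses injectivity of $\Psi_i$, which is why the argument is run near the intersection point (equivalently, on the region where the relevant $f_i$ is nonnegative, on which $\Psi_i$ is injective). The remaining verifications — the explicit form of $u-(w,0)$ and the identification of $\partial A$ with the graph of $\max(f_1,f_2)$ — are routine.
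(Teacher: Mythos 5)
Your proof is correct and takes essentially the same route as the paper: you verify the optimality system of Lemma \ref{lemma:projection} directly with the explicit multipliers $(c_1,c_2)=(f_1,0)$ for (i) (and symmetrically for (ii)), and you argue (iii) by contradiction, using that the optimality system at a point of $\bA_1$ forces $w=\Psi_1(\xi,\eta)$ with $f_1(\xi,\eta)>f_2(\xi,\eta)$, which injectivity of $\Psi_1$ turns into a contradiction. The only minor deviation is your justification of injectivity via the positive-definite Jacobian $I+\nabla f_i(0)\nabla f_i(0)^{\mathrm T}$ at the origin, whereas the paper deduces it from uniqueness of the projection onto $A_i$ via Lemma \ref{lemma:APsystem}; both serve the same purpose and the rest of the argument is the same.
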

  \begin{proof}
 For $(X,Y)\in \R^2$, Lemma \ref{lemma:APsystem} implies that
  $(x,y,f_1(x,y))=P_{A_1}(X,Y,0)$ if and only if
\[
  x + f_{1x}(x,y)f_1(x,y) = X,\ y + f_{1y}(x,y)f_1(x,y) = Y.
\]
  This means $(X,Y) = \Psi_1(x,y)$.
 Thus $\Psi_1$ is injective. Similarly, $\Psi_2$ is injective.
   
  Since $C$ is smooth, Lemma \ref{lemma:projection} gives that $(x,y,z)=P_A(X,Y,0)$ if and only if there exist
   $\lambda_1,\lambda_2\in \R$ such that
\begin{equation}
\begin{cases}
    &    -\begin{pmatrix}
     x - X \\
     y - Y \\
     z
	 \end{pmatrix}
  = \lambda_1\begin{pmatrix}
	      -f_{1x}(x,y,z)\\
	      -f_{1y}(x,y,z)\\
	      1
	     \end{pmatrix}
  + \lambda_2\begin{pmatrix}
	      -f_{2x}(x,y,z)\\
	      -f_{2y}(x,y,z)\\
	      1
 \end{pmatrix},\\
 & \lambda_1, \lambda_2 \leq 0,\quad
 \lambda_i(z - f_i(x,y)) = 0,\ i = 1,2.
\end{cases}
\label{eq:projection2}
\end{equation}   
  If $(X,Y)\in \Psi_1(\{f_1 - f_2>0\})$, then there exists $(x,y)$
  such that $f_1(x,y) > f_2(x,y)$
  and
  \[
   \begin{pmatrix}
    X\\
    Y \\
   \end{pmatrix}
  = \begin{pmatrix}
     x + f_{1x}(x,y)f_1(x,y)\\
     y + f_{1y}(x,y)f_1(x,y)
    \end{pmatrix}.
  \]
  Since $f_1(x,y)>0$, we have, for $\lambda_1 = -f_1(x,y)$,
  \[
     -\begin{pmatrix}
     x - X \\
     y - Y \\
     f_1(x,y)
    \end{pmatrix}
  = \lambda_1\begin{pmatrix}
	      -f_{1x}(x,y,z)\\
	      -f_{1y}(x,y,z)\\
	      1
	     \end{pmatrix}.
  \]
  Thus $(x,y,f_1(x,y))$ satisfies $(\ref{eq:projection2})$
  and hence $P_A(X,Y,0) = (x,y,f_1(x,y)) \in \bA_1$.
  Similarly, if $(X,Y)\in \Psi_2(\{f_2 - f_1 > 0\})$,
  then $P_A(X,Y,0) \in \bA_2$.
  Thus we have shown (i) and (ii).
  
  Next, we will show (iii).
  Let $(X,Y) \in \Psi_1(\{f_2 - f_1 \geq 0\})\cap \Psi_2(\{f_1 -
  f_2 \geq 0\})$ and $(x,y,z)=P_A(X,Y,0)$.
  Then the system $(\ref{eq:projection2})$ is satisfied for some $\lambda_1,\lambda_2\in \R$.
  If $(x,y,z) \in \bA_1$, then
  we have $z=f_1(x,y) > f_2(x,y)$, and hence
  \[
    -\begin{pmatrix}
     x - X \\
     y - Y \\
     z
    \end{pmatrix}
  = \lambda_1\begin{pmatrix}
	      -f_{1x}(x,y)\\
	      -f_{1y}(x,y)\\
	      1
	     \end{pmatrix}.
  \]  
   Since $\lambda_1=z=f_1(x,y)$, we have
   \[
    \begin{pmatrix}
     X\\
     Y
    \end{pmatrix}
   =
   \begin{pmatrix}
    x + f_1(x,y)f_{1x}(x,y)\\
    y + f_1(x,y)f_{1y}(x,y)
   \end{pmatrix}.
   \]  
  Thus $(X,Y)\in \Psi_1(\{f_1 - f_2>0\})$.
  Since $\Psi_1$ is injective, this contradicts to the inclusion $(X,Y) \in
  \Psi_1(\{f_2 - f_1 \geq 0\})$. Thus $(x,y,z) \in \partial A\setminus\bA_1$.
  Similarly, we have $(x,y,z)\in \partial A\setminus\bA_2$.  
  \end{proof}

 The boundary of $\Psi_1(\{f_1 - f_2>0\})$ is $\Psi_1(\{f_1 - f_2=0\})$,
 and the software \texttt{Macaulay2} \cite{GDSM} calculates its vanishing ideal 
\[
   \langle X - x - f_{1x}(x,y)f_1(x,y), Y - y - f_{1y}(x,y)f_1(x,y),\\
  f_1(x,y) - f_2(x,y)\rangle \cap \R[X,Y]
\]
by the elimination theory \cite{CLD2015} as in Figure \ref{fig1}.
 In the following examples, let $A = \{(x,y):z \geq f_1(x,y), z \geq f_2(x,y)\}$
 and $C  = \{(x,y,z):z=f_1(x,y)=f_2(x,y)\}$.

\begin{figure}[ht]
\begin{tabular}{c}
\begin{minipage}{0.45\hsize}
\centering
\begin{tikzpicture}[scale=0.75] 
\pgfplotsset{ticks=none}
  \begin{axis}
              [axis lines = center, xlabel = $x$,
            ylabel = $y$,
                xmax=0.3,
                xmin=-0.3,
                ymax=0.2,
                ymin=-0.2,
                legend style={nodes={scale=0.65, transform shape}}, 
                legend pos = south west, ]
                \addplot[ domain=-0.3:0.3, 
    samples=100, 
    color=black,
          very thin,
    ] plot {-0.5*x};
\addplot+[no markers, color=black, ultra thick, ] table [x=x, y=y,] {AltProj.pgf-plot1.table};
\addplot+[no markers, color=black, very thick,  dotted, ] table [x=x, y=y,] {AltProj.pgf-plot2.table};

    \legend{$\pi(B)$, $\Psi_1(\{f_1-f_2=0\})$, $\Psi_2(\{f_2-f_1=0\})$}
  \end{axis}
\end{tikzpicture}
\end{minipage}
\begin{minipage}{0.45\hsize}
\centering
\begin{tikzpicture}[scale=0.75] 
\pgfplotsset{ticks=none}
  \begin{axis}
              [axis lines = center, xlabel = $x$,
            ylabel = $y$,
                xmax=0.2,
                xmin=-0.2,
                ymax=0.4,
                ymin=-0.4,
                legend style={nodes={scale=0.65, transform shape}}, 
                legend pos = south west, ]
                \addplot[ domain=-0.2:0.2, 
    samples=500, 
    color=black, very thin, 
    ] plot {-2*x};
\addplot+[no markers, color=black, ultra thick, ] table [x=x, y=y,] {AltProj.pgf-plot3.table};
    \addplot+[no markers, color=black, very thick,  dotted, ] table [x=x, y=y,] {AltProj.pgf-plot4.table};

    \legend{$\pi(B)$, $\Psi_1(\{f_1-f_2=0\})$, $\Psi_2(\{f_2-f_1=0\})$}
  \end{axis}
\end{tikzpicture}
\end{minipage}
\end{tabular}
\caption{Example~\ref{ex4.12} (left) and Example~\ref{ex4.13} (right)}\label{fig1}
\end{figure}
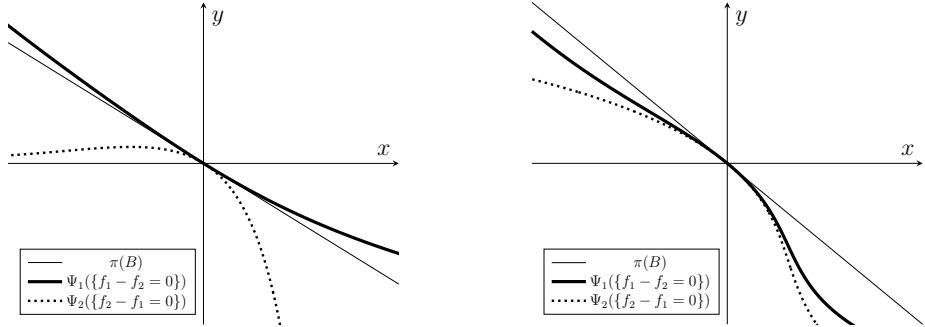

\begin{exmpl}\label{ex4.12}
  Let $f_1(x,y) = x^2 + y^4$ and $f_2(x,y) = (x - 1)^2 + (y - 1)^4 - 2$
 as in Example \ref{ex:region1}. 
 Let $B = \{(x,y,0):x + 2y = 0\}$; 
 the tangent line to the curve $C$ at $(0,0,0)$.
 In the left of Figure \ref{fig1}, the solid curve and the dotted curves are
 $\Psi_1(\{f_1 - f_2=0\})$ and $\Psi_2(\{f_2 - f_1=0\})$, respectively.
 The thin line is $\pi(B)$, where $\pi:(x,y,z)\mapsto (x,y)$.
 The line $\pi(B)$ passes through the origin while it keeps lying
 on the region $\Psi_1(\{f_2 - f_1 \geq 0\})\cap \Psi_2(\{f_1 -
 f_2 \geq 0\})$. Thus all points on $B$ are projected to $C$
 by $P_A$.

 Next, we will show that the convergence rate may differ depending on the initial point.
 We consider the projection of a point on $B' =
 \{t(0,1,0):t\in \R\}$ by $P_A$.
 Let $B'_+ = \{(0,y,0):y>0\}$ and $B'_- = \{(0,y,0):y<0\}$.
 For $(x,y) \in \Psi_1^{-1}(0,Y)$, we see that
 \begin{equation}
 x + 2x(x^2 + y^4) = 0,\ y + 4y^3(x^2 + y^4) = Y.
  \label{eq:region}
 \end{equation}
 Then we have $x = 0$. If $(0,Y,0)\in B'_+$, then the second equation of
 $(\ref{eq:region})$ ensures $y > 0$.
 Since $f_1(0,y) - f_2(0,y) = y^4 - ((y - 1)^4 - 1) = 2y(2(y -
 \frac{3}{4})^2 + \frac{1}{2}) > 0$,
 we have $(0,Y) \in \Psi_1(\{f_1 - f_2 > 0\})$.
 From (i) of Theorem \ref{thm:region}, a point in $B'_+$ is projected to $\bA_1 =
 \{(x,y,z):z = f_1(x,y) > f_2(x,y)\}$ by $P_A$.
 If the initial point is taken from $B'_+$,
 then the sequence $\{(x_k,y_k,0)\}$ constructed by alternating projections
 between $A$ and $B'$ behaves like those between $A_1 = \{(x,y,z):z \geq
       f_1(x,y)\}$ and $B'$.
 Since $f_1(t(0,1)) = t^4$, 
       Theorem \ref{thm:hypersurface} gives $\|(x_k,y_k,0)\| = \Theta(k^{-\frac{1}{6}})$.
       
 Similarly, if the initial point is taken from $B'_-$,
 then the sequence $\{(x_k,y_k,0)\}$ constructed by alternating projections
 between $A$ and $B'$ behaves like those between $A_2 = \{(x,y,z):z \geq
 f_2(x,y)\}$ and $B'$.
 Here, the lowest degree of $f_2(t(0,1))=-4t + 6t^2 - 4t^3 + t^4$ is
       equal to $1$, and this means that $B'$ intersects transversely with $A_2$.
 Therefore, we see that $\{(x_k,y_k,0)\}$ converges linearly
 from a well-known fact that the alternating projection method
 for a transversal intersection converges linearly; see, e.g. \cite{GPR1967}, \cite{LM2008}.
\end{exmpl} 
\begin{exmpl}\label{ex4.13}
 Let  $f_1(x,y) = \left(x + \frac{1}{2}\right)^2 + \left(y +
 \frac{1}{2}\right)^4 - \frac{5}{16}$ and $f_2(x,y) = x^2 + y^4$ as in
	(ii) of Example \ref{ex:region2}.
	The tangent line to the curve $C$ at $(0,0,0)$ is given by $B = \{t(1,-2,0):t
	\in \R\}$.
  Now, the right of Figure \ref{fig1} corresponds to this case.
 We see that the thin line $\pi(B)$ is contained in $\Psi_1(\{f_1 - f_2 \geq 0\})$ around the origin.
 From (i) of Theorem \ref{thm:region}, any point on $\pi(B)$ close to the origin is projected to
 $\bA_1= \{(x,y,z):z=f_1(x,y)> f_2(x,y)\}$. 
 A sequence $\{(x_k,y_k,0)\}$ constructed by alternating projections
 between $A$ and $B$ behaves like those between $A_1 = \{(x,y,z):z \geq
	f_1(x,y)\}$ and $B$.
	Since $f_1(t(1,-2)) = 7t^2 - 16t^3 + 16t^4$,
	Theorem \ref{thm:hypersurface} gives $\|(x_k,y_k,0)\| = \Theta(k^{-\frac{1}{2}})$.
\end{exmpl}

\section{Intersections with Subspaces}
\label{sec:subsp}
We return to the case that a semialgebraic set $A$ is
defined by a single polynomial.
In Section \ref{section:hypersurface}, we have obtained
the exact convergence rate of the alternating projection method
if the intersecting subspace has a dimension one.
However, if the intersecting subspace has a dimension more than one, the
convergence rate depends on the initial point.
Section \ref{section:special} is devoted to a specific hypersurface to explain this phenomenon. 
In the remaining sections, we give upper bounds on the convergence rates, by applying the arguments for the exact rates.

\subsection{Hyperplanes}
We consider an intersection of a hypersurface $A$ defined by a convex
polynomial $g$ and a hyperplane $B$:
\begin{align*}
 A & = \{(x,z) \in \R^n\times \R:z \geq g(x)\},\\
 B & = \{(x,0): x \in \R^n\},
\end{align*}
where $A\cap B$ is a singleton.
We note that $g(x)>0\ (x\neq0)$ and $g(0) = 0$.

It is known that for any convergent power series
$f$ with $f(x)>0\ (x\neq 0),\ f(0) =
0$, there are $C>0$ and a rational number $\alpha\geq 1$ such that
\[
 f(x) \geq C\|x\|^\alpha\ \text{ for }x\text{ close to }0,
\]
see, e.g. \cite[equality $(G1)$]{KS2014}.
The smallest exponent $\alpha$ is called the \textit{\L{}ojasiewicz exponent} of $f$ and
denoted by $\cL(f)$.
\begin{exmpl}
\label{ex:loja}
 Corollary 2.1 of \cite{TB2014}
 says that if 
 $f$ is convenient and nondegenerate, then 
 $\cL(f)$ is the maximum length from the origin to the intersection of
 $\Gamma(f)$ and each axis.
 For example, if $f(x_1,x_2,x_3)=x_1^6 + x_2^4 + x_3^2$, we have $\cL(f) = 6$.
\end{exmpl}
We consider the sequences $\{a_k\},\ \{b_k\}$ constructed by
 alternating projections as
\[
 a_k \overset{P_B}{\longmapsto} b_k \overset{P_A}{\longmapsto} a_{k+1}.
\]
The following lemma is implied by
the inequality $(4.3)$ of \cite{BLY2014}.
We give a proof for the reader's convenience.
 \begin{lemma2}
  \label{lemma:convex_ineq}
 Let $A$ be a closed convex set, $B$ be a linear subspace and $A\cap
 B=\{0\}$.
 Then we have
 \[
\|a_{k+1}\|^2 +  d(a_k, B)^2 \leq \|a_k\|^2.
 \]
 \end{lemma2}
 \begin{proof}
  If $b_k = P_B a_k = 0$, then $d(a_k, B) = \|a_k\|$ and $a_{k+1} = P_A
  b_k = 0$.
  Thus we obtain the desired inequality. So, we assume $b_k \neq 0$.
 Since $B$ is a linear subspace, we have
  $\|a_k\|^2 = \|b_k\|^2 + \|b_k - a_k\|^2$.
  By the property of a projection, we see $b_k - a_{k+1} \in
  N_A(a_{k+1})$, which means
  $\langle b_k - a_{k+1}, a -  a_{k+1} \rangle \leq 0$ 
 for all $a\in A$.
 Since $0\in A$, we obtain $0 \geq \langle b_k - a_{k+1}, - a_{k+1} \rangle
 = -\langle b_k, a_{k+1} \rangle + \|a_{k+1}\|^2
 \geq -\|b_k\|\|a_{k+1}\| + \|a_{k+1}\|^2$.
  Thus we have $\|b_k\| \geq \|a_{k+1}\|$.
  Therefore $\|a_k\|^2 = \|b_k\|^2 + \|b_k - a_k\|^2 \geq \|a_{k+1}\|^2 + d(a_k,B)^2$. 
 \end{proof}
  \begin{thm}
   \label{thm:hshp}
  Suppose $g(x)^2\geq C\left(\sum_i x_i^2\right)^d$ for $x$ close to $0$. Then
  $b_k$ converges to $0$ in the rate $O\left(k^{\frac{-1}{2d-2}}\right)$.
  Moreover
   \[
  \limsup_{k\to \infty}\left((d-1)C\right)^{\frac{1}{2d-2}}k^{\frac{1}{2d-2}}\|b_k\|
 \leq 1. 
 \]
  \end{thm}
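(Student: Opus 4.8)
The plan is to reduce the statement to Corollary \ref{cor:basic} applied to the scalar sequence $\|b_k\|^2$, using Lemma \ref{lemma:convex_ineq} together with the assumed lower bound on $g$.

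\emph{Setting up the geometry.} We may assume $b_k\neq 0$ for all $k$, since otherwise $b_k=0$ from some index on and the claim is trivial. For $k\geq 1$ the point $b_{k-1}\in B$ does not belong to $A$ (as $A\cap B=\{0\}$ and $b_{k-1}\neq 0$), so $a_k=P_A(b_{k-1})$ lies on $\partial A=\{(x,z):z=g(x)\}$; write $a_k=(x_k,g(x_k))$. Since $B$ is the coordinate hyperplane $\{z=0\}$, we have $b_k=P_B(a_k)=(x_k,0)$, and therefore
\[
\|b_k\|=\|x_k\|,\qquad d(a_k,B)=g(x_k),\qquad \|a_k\|^2=\|b_k\|^2+d(a_k,B)^2 .
\]
Moreover $b_k\to 0$: the alternating projections between the closed convex sets $A$ and $B$ converge to a point of $A\cap B=\{0\}$, so $a_k\to 0$ and $b_k=P_B a_k\to 0$.

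\emph{Producing the recursion.} Plugging the three identities into Lemma \ref{lemma:convex_ineq} (applied at step $k\geq 1$), namely $\|a_{k+1}\|^2+d(a_k,B)^2\leq\|a_k\|^2$, and writing $\|a_{k+1}\|^2=\|b_{k+1}\|^2+g(x_{k+1})^2$ and $\|a_k\|^2=\|b_k\|^2+g(x_k)^2$, the two $g(x_k)^2$ terms cancel, leaving
\[
\|b_{k+1}\|^2+g(x_{k+1})^2\leq\|b_k\|^2 .
\]
Because $b_k\to 0$, for all large $k$ the point $x_{k+1}$ lies in the neighbourhood where the hypothesis holds, so $g(x_{k+1})^2\geq C\|x_{k+1}\|^{2d}=C\|b_{k+1}\|^{2d}$; here $d\geq 2$, since $g$ is a polynomial with a global minimum at $0$, which forces $\nabla g(0)=0$. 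Hence, with $y_k:=\|b_k\|^2$, for all large $k$,
\[
y_{k+1}\bigl(1+C\,y_{k+1}^{\,d-1}\bigr)\leq y_k ,
\]
which is exactly the hypothesis of Corollary \ref{cor:basic} with $q=d-1\in\N$, the same constant $C$, and $h\equiv 0$ (together with $y_k\geq 0$ and $y_k\to 0$).

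\emph{Conclusion.} Corollary \ref{cor:basic} then gives $\limsup_{k\to\infty}\bigl((d-1)C\bigr)^{1/(d-1)}k^{1/(d-1)}\|b_k\|^2\leq 1$; taking square roots yields $\limsup_{k\to\infty}\bigl((d-1)C\bigr)^{1/(2d-2)}k^{1/(2d-2)}\|b_k\|\leq 1$, and the rate $O(k^{-1/(2d-2)})$ follows from the Remark after Corollary \ref{cor:basic}. The one point requiring care is the cancellation step: one must check that the $g(x_k)^2$ terms really drop out and that the surviving inequality points in the direction matching Corollary \ref{cor:basic}; after that the argument is short. A secondary technical point is the convergence $b_k\to 0$ for an arbitrary initial point, which I would cite from the classical theory of alternating projections for two closed convex sets with nonempty intersection rather than reprove.
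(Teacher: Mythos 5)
Your proof is correct and follows essentially the same route as the paper: both apply Lemma \ref{lemma:convex_ineq}, substitute the boundary parametrization $a_k=(x_k,g(x_k))$ so the $d(a_k,B)^2=g(x_k)^2$ terms cancel, invoke the hypothesis $g^2\geq C\|x\|^{2d}$, and feed $y_k=\|b_k\|^2$ into Corollary \ref{cor:basic} with $q=d-1$. Your extra remark that $d\geq 2$ (via $\nabla g(0)=0$) is a welcome tightening of the paper's observation that $d\geq 1$, since $q\geq 1$ is genuinely needed for the corollary.
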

\begin{proof}
Let $a_k =(x_{1,k},\ldots, x_{n,k}, z_k)$ and $d_k := \|b_k\| = \|(x_{1,k},\ldots, x_{n,k})\|$.  Since $d(a_k, B) = z_k$, Lemma \ref{lemma:convex_ineq} gives
\[
\|(x_{1,k+1},\ldots, x_{n,k+1}, z_{k+1})\|^2 + z_k^2  \leq
  \|(x_{1,k},\ldots, x_{n,k}, z_k)\|^2.
  \]
  Thus we have
  \[
  d_{k+1}^2 + z_{k+1}^2 \leq d_k^2.
  \]
  Since $z_{k+1}^2=g(x_{1,k+1},\ldots,x_{n,k+1})^2\geq C\left(\sum_i x_{i,k+1}^2\right)$, we obtain
  \[
   d_{k+1}^2 + C(d_{k+1}^2)^d \leq d_k^2.
\]
 Since $\cL(g)\geq 1$, we have $2d=\cL(g^2)\geq 2$.
 By Corollary \ref{cor:basic}, we have
   \[
  \limsup_{k\to \infty}\left((d-1)C\right)^{\frac{1}{d-1}}k^{\frac{1}{d-1}}d_k^2
 \leq 1,
 \] 
 and thus $d_k = O\left(k^\frac{-1}{2d-2}\right)$.
\end{proof}
\begin{remark}
 Suppose that
  $A = \{(x,z)\in \R^n\times \R: z \geq g(x)\}$, 
  $B = \{(x,0)\in \R^n\times \R\}$
 and
$ (x_k,0) \overset{P_A}{\longrightarrow} (x,z) \overset{P_B}{\longrightarrow} (x_{k+1},0)$.
Then Lemma \ref{lemma:APsystem} implies that
 \begin{align*}
  x + g(x)\nabla g(x) & = x_k,\\
  x_{k+1} & = x.
 \end{align*}
 Thus the sequence $x_k$ is expected to follow the path defined by the gradient
 system
 \[
  \frac{d}{dt}x(t) = - \nabla \frac{1}{2}g^2(x(t)).
 \]
 The convergence rate of the gradient system is discussed in \cite[Thm
 1.6]{H2012}. The exponent used in their result can be obtained with $\cL(g^2)$ and is
 equal to
 the rate in this paper. 
\end{remark}

\subsection{Exact Rates for a Specific Polynomial}
\label{section:special}
 We consider the specific polynomial
 \[
  g(x,y) = x^2 + y^4.
 \]
  Let $A=\{(x,y,z)\in \R^3:z\geq g(x,y)\}$,
 $B = \{(x,y,0)\in \R^3:x,y \in \R\}$,
 and $b_k = \{(x_k,y_k,0)\}$ be the sequence constructed by $b_{k+1} = P_B\circ P_A(b_{k})$.
 Since $\cL(g) = 4$, Theorem \ref{thm:hshp} shows
 $\limsup\limits_{k\to \infty}Ck^\frac{1}{6}\|(x_k,y_k)\|\leq 1$,
 and thus the convergence rate has the upper bound $O(k^{-\frac{1}{6}})$.
 On the other hand, the following proposition gives \textit{exact} convergence rates, which depend on the
 initial points.
 Moreover, it shows that the exact rate achieves the upper bound
 for a generic initial point.
\begin{prop}
\label{prop:special}
 Let $\{(x_k,y_k)\}$ be the sequence defined by
 $(x_{k+1},y_{k+1},0) = P_B\circ P_A((x_k,y_k,0)$ for $k=0,1,\ldots$.
If $y_0 \neq 0$, then $(x_k,y_k)$ converges to $0$ in the exact rate
 of $\Theta(k^{-\frac{1}{6}})$.
 If $y_0=0$, then $(x_k,y_k)$
  converges to $0$ in the exact rate of $\Theta(k^{-\frac{1}{2}})$.
\end{prop}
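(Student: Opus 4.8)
The plan is to extract from Lemma \ref{lemma:APsystem} the explicit recursion governing $(x_k,y_k)$ and then analyse it coordinatewise. Since $g(x,y)=x^2+y^4$ has $g_x=2x$ and $g_y=4y^3$, Lemma \ref{lemma:APsystem} together with $P_B(x,y,z)=(x,y,0)$ shows that, writing $c_k:=x_k^2+y_k^4$,
\[
 x_{k+1}\bigl(1+2c_{k+1}\bigr)=x_k,\qquad y_{k+1}\bigl(1+4y_{k+1}^2c_{k+1}\bigr)=y_k .
\]
Both left-hand factors are $\ge 1$, so $|x_k|$ and $|y_k|$ are non-increasing and sign-preserving; replacing the sequences by $|x_k|,|y_k|$ (which satisfy the same equations) I may assume $x_k,y_k\ge 0$, and then $x_k,y_k\to 0$ (bounded and monotone, and the limits must vanish by the recursions). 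We may assume $(x_0,y_0)\ne 0$, the sequence being identically $0$ otherwise. If $y_0=0$ then $y_k\equiv 0$, hence $c_{k+1}=x_{k+1}^2$ and $x_{k+1}(1+2x_{k+1}^2)=x_k$ with $x_0>0$; this is the hypothesis of Lemma \ref{lemma:basic} with $C=2$, $q=2$, $h\equiv 0$, so $\|(x_k,y_k)\|=x_k=\Theta(k^{-1/2})$, which is the second assertion.

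Now assume $y_0\ne 0$, so $y_k>0$ for all $k$. The upper bound $\|(x_k,y_k)\|=O(k^{-1/6})$ is Theorem \ref{thm:hshp} applied with $\cL(g)=4$ (Example \ref{ex:loja}), which gives $g^2\ge C(x^2+y^2)^4$ near the origin; it remains to prove the matching lower bound $\|(x_k,y_k)\|=\Omega(k^{-1/6})$. The crux is to decouple the $y$-recursion by showing that $x_k$ is negligible against $y_k^2$. I claim $x_k/y_k^2$ is bounded: this is clear if $x_0=0$, and otherwise $x_k>0$ and
\[
 \frac{x_{k+1}/y_{k+1}^2}{x_k/y_k^2}=\frac{\bigl(1+4y_{k+1}^2c_{k+1}\bigr)^2}{1+2c_{k+1}},
\]
which is $<1$ whenever $4y_{k+1}^2+8y_{k+1}^4c_{k+1}<1$, hence for all large $k$ since $y_k,c_k\to 0$; thus $x_k/y_k^2$ is eventually non-increasing and therefore bounded, say $x_k\le M y_k^2$ for $k\ge k_1$. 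Consequently $c_{k+1}=x_{k+1}^2+y_{k+1}^4\le (M^2+1)y_{k+1}^4$, so the $y$-recursion yields $y_k\le y_{k+1}\bigl(1+4(M^2+1)y_{k+1}^6\bigr)$ for $k\ge k_1$.

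From here I finish by telescoping $w_k:=y_k^{-6}$ (so $w_k\uparrow\infty$). For $k$ large, $4(M^2+1)/w_{k+1}\le 1$, so using $(1+t)^6\le 1+63t$ on $[0,1]$ together with $w_k\le w_{k+1}$,
\[
 w_{k+1}\le w_k\Bigl(1+\tfrac{4(M^2+1)}{w_{k+1}}\Bigr)^{6}\le w_k+252(M^2+1)\,\frac{w_k}{w_{k+1}}\le w_k+252(M^2+1),
\]
whence $w_k=O(k)$, i.e.\ $y_k=\Omega(k^{-1/6})$. Since $x_k\le M y_k^2=o(y_k)$, this gives $\|(x_k,y_k)\|=y_k\bigl(1+o(1)\bigr)=\Theta(k^{-1/6})$, completing the proof. (Instead of the explicit telescoping one may feed the inequality $y_k\le y_{k+1}(1+4(M^2+1)y_{k+1}^6)$ into the argument of Corollary \ref{cor:basic} run in the reversed direction; the estimate above is that argument made concrete.) I expect the decoupling claim $x_k=O(y_k^2)$ to be the main obstacle: the bound $x_k=O(k^{-1/6})$ from Theorem \ref{thm:hshp} only yields $c_{k+1}=O(k^{-1/3})$, which is too lossy to pin down the $y$-rate, so one is forced to compare the two recursions against each other rather than against their common envelope.
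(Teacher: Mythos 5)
Your proof is correct, and it takes a genuinely different route from the paper on the key step. The paper establishes the decoupling inequality $x_k<y_k^2$ (eventually) via two technical lemmas: a forward-invariance argument (Lemma \ref{lemma:x2y4}) combined with a contradiction argument (Lemma \ref{lemma:x2y4b}) that shows the orbit must eventually enter the region $\{x<y^2\}$. Your observation that the ratio $r_k:=x_k/y_k^2$ satisfies
\[
\frac{r_{k+1}}{r_k}=\frac{(1+4y_{k+1}^2c_{k+1})^2}{1+2c_{k+1}}<1\quad\text{once }4y_{k+1}^2+8y_{k+1}^4c_{k+1}<1
\]
is a cleaner and more direct way to obtain what is actually needed, namely boundedness of $r_k$; it replaces both lemmas with a one-line monotonicity check and does not require the initial point to be close to the origin. (The paper's sharper conclusion $x_k<y_k^2$ only buys slightly cleaner constants in the $y$-recursion, which you do not need since you invoke Theorem \ref{thm:hshp} for the matching upper bound rather than reading it off the recursion as the paper does.) Your telescoping estimate on $w_k=y_k^{-6}$ is, as you note yourself, a concrete instantiation of the reversed Corollary \ref{cor:basic} argument that the paper gestures at with ``by similar arguments to Corollary \ref{cor:basic}, the second inequality implies $\liminf\ldots\geq 1$.'' The only places to be slightly careful, which your write-up handles implicitly, are that $c_{k+1}>0$ (needed to divide) follows from $y_0\neq 0\Rightarrow y_k\neq 0$ for all $k$, and that the reduction to $x_k,y_k\ge 0$ is legitimate because the recursion is odd in each coordinate separately.
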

 The proof uses the following two technical lemmas.
 By Lemma \ref{lemma:APsystem},
we have
\begin{equation}
  \begin{cases}
  x_{k+1}(1 + 2(x_{k+1}^2 + y_{k+1}^4)) = x_k\\
  y_{k+1}(1 + 4y_{k+1}^2(x_{k+1}^2 + y_{k+1}^4)) = y_k
  \end{cases},\ z_{k+1} = x_{k+1}^2 + y_{k+1}^4.
  \label{eq:x2y4}
\end{equation}
  \begin{lemma2}
   \label{lemma:x2y4}
  For sufficiently small $\epsilon>0$, if $0< x_k < y_k^2 \leq \epsilon$,
  then we have $0 < x_{k+1} < y_{k+1}^2 < \epsilon$.
  \end{lemma2}
 \begin{proof}
  Let $(X,Y)=(x_k,y_k)$, $(x,y)=(x_{k+1},y_{k+1})$.
 By $(\ref{eq:x2y4})$, we see that $x, y>0$ and $x\leq X,\ y\leq Y$.
 Now we have for sufficiently small $\epsilon>0$,
 \begin{align*}
  (1 + 4y^2(x^2 + y^4))^2 & \leq (1 + 4\epsilon(x^2 + y^4))^2\\
  &  = 1 + 8\epsilon (x^2 + y^4) + 16\epsilon^2 (x^2 + y^4)^2 \\
& = 1 + 2(x^2 + y^4) + 16\epsilon^2 (x^2 + y^4)^2 + (8\epsilon - 2) (x^2 +
  y^4)\\
  & = 1 + 2(x^2 + y^4) + (x^2 + y^4)(32\epsilon^4 + 8\epsilon
  - 2)\\
  & \leq 1 + 2(x^2 + y^4).
 \end{align*}
Thus we obtain
 \[
 1 < \frac{Y^2}{X}
 = \frac{y^2}{x}\frac{(1 + 4y^2(x^2 + y^4))^2}{1 + 2(x^2 + y^4)}
 \leq \frac{y^2}{x}.
 \]
 \end{proof}
\begin{lemma2}
 \label{lemma:x2y4b}
 Suppose that $(x_0,y_0)$ be a point which is sufficiently
 close to $(0,0)$ and $x_0, y_0>0$.
 Then there exists $k_0$ such that $x_k < y_k^2$ for all $k>k_0$.
\end{lemma2}
  \begin{proof}
  By Lemma \ref{lemma:x2y4}, it is enough to show there exists $k_0$ such
   that $x_{k_0} < y_{k_0}^2$.  
   We show it by contradiction.
   Suppose that $x_k \geq y_k^2$ for all $k$.
   Since $x_k = x_{k+1}(1 + 2(x_{k+1}^2 + y_{k+1}^4))$, we have
   \[
    x_{k+1}(1 + 2x_{k+1}^2) \leq x_k 
   \]
   By Corollary \ref{cor:basic}, the inequality implies that
   $\displaystyle\limsup_{k\to \infty} 2k^\frac{1}{2}x_k \leq 1$.
   Then $x_k^2 \leq \frac{C}{k}$ for $C > \frac{1}{4}$ and sufficiently large $k$.
   
   Next, $y_k = y_{k+1}(1 + 4y_{k+1}^2(x_{k+1}^2 + y_{k+1}^4))$ gives that
   \[
y_{k+1}(1 + 4y_{k+1}^6)\leq y_k \leq y_{k+1}(1 + 8y_{k+1}^2 x_{k+1}^2).
   \]
   Since
   \[
   (1 + 8y_{k+1}^2x_{k+1}^2)(1 - 8y_k^2 x_{k+1}^2)
   = 1 - 8(y_k^2 - y_{k+1}^2)x_{k+1}^2 - 64y_{k+1}^2 y_k^2 x_{k+1}^4
   \leq 1,
   \]
   we obtain
   \begin{align*}
    y_{k+1} & \geq y_{k+1}(1 + 8y_{k+1}^2x_{k+1}^2)(1 - 8y_k^2
    x_{k+1}^2) \geq y_k(1 - 8y_k^2 x_{k+1}^2),\\
    y_{k+1}^2 & \geq y_k^2(1 - 8y_k^2 x_{k+1}^2)^2
    \geq y_k^2(1 - 16y_k^2 x_{k+1}^2),\\
    \frac{1}{y_{k+1}^2} & 
    \leq \frac{1}{y_k^2(1 - 16y_k^2 x_{k+1}^2)}
    \leq \frac{1}{y_k^2} + \frac{16x_{k+1}^2}{(1 - 16y_k^2 x_{k+1}^2)}.\\
   \end{align*}
   By summing the last inequality, we have
\begin{align*}
       \frac{1}{y_K^2} - \frac{1}{y_{K_0}^2} & \leq \sum_{k=K_0}^{K-1}
 \frac{16x_{k+1}^2}{(1 - 16y_k^2 x_{k+1}^2)}\\
 & \leq \sum_{k=K_0}^{K-1}\frac{16\frac{C}{k+1}}{1 - \frac{16C}{k+1}y_k^2}
 = \sum_{k=K_0}^{K-1}\frac{16C}{k+1 - 16Cy_k^2}.
\end{align*}
   Since $y_k\to 0$, for sufficiently large $K_0$, we see that $1 -
   16Cy_k^2>0$.
   Then we obtain
    \begin{align*}
     \frac{1}{y_K^2} - \frac{1}{y_{K_0}^2}
     & \leq \sum_{k=K_0}^{K-1}\frac{16C}{k+1 - 16Cy_k^2}\leq \sum_{k=K_0}^{K-1}\frac{16C}{k},\\
     \frac{1}{y_K^2} &  \leq 16C\log K + C_1,\\
     y_K^2\log K & \geq \frac{1}{16C + \frac{C_1}{\log K}} \geq C_2
    \end{align*}
   for some positive constants $C_1,C_2$.
   Thus, we have $x_k \leq \frac{C}{\sqrt{k}}$ and $y_k^2 \geq
   \frac{C_2}{\log k}$ for all sufficiently large $k$.
   This contradicts to the assumption that $x_k\geq y_k^2$ for all $k$.
\end{proof}

\begin{proof}
[Proposition \ref{prop:special}]
By symmetry, we may assume $x_0, y_0 \geq 0$.
 If $x_0, y_0>0$, then Lemma \ref{lemma:x2y4b} implies that for any $\epsilon>0$, we have $0<x_k < y_k^2\leq \epsilon$
 for all sufficiently large $k$. Then
 \[
  \|(x_k,y_k)\| \leq \sqrt{y_k^4 + y_k^2} = y_k\sqrt{1 + y_k^2}.
 \]
 Since $y_k = y_{k+1}(1 + 4y_{k+1}^2(x_{k+1}^2 + y_{k+1}^4))$, we have
 \[
  y_{k+1}(1 + 4 y_{k+1}^6) \leq y_k \leq y_{k+1}(1 + 8y_{k+1}^6).
 \]
 By Corollary \ref{cor:basic}, the first inequality implies
 $\limsup\limits_{k\to \infty} 24^\frac{1}{6}k^\frac{1}{6}y_k\leq 1$,\\
 and hence
  $\limsup\limits_{k\to \infty} 24^\frac{1}{6}k^\frac{1}{6}\|(x_k,y_k)\|\leq 1$.
 By similar arguments to Corollary \ref{cor:basic}, the second
 inequality
 implies $\liminf\limits_{k\to \infty}
 48^\frac{1}{6}k^\frac{1}{6}\|(x_k,y_k)\|\geq 1$.
 If $x_0=0, y_0>0$, then we have $x_k=0$ and
 $y_k = y_{k+1}(1 + 4y_{k+1}^6)$.
 Thus Lemma \ref{lemma:basic} implies that 
 $\lim\limits_{k\to \infty} 24^\frac{1}{6}k^\frac{1}{6}\|(x_k,y_k)\| = 1$.
  If $x_0>0, y_0=0$, then we have $y_k=0$ and
 $x_k = x_{k+1}(1 + 2x_{k+1}^2)$.
 Thus Lemma \ref{lemma:basic} implies
  $\lim\limits_{k\to \infty} 2 k^\frac{1}{2}\|(x_k,y_k)\| = 1$.
\end{proof}

\subsection{Subspaces with Dimensions More than One}
\label{sec:dim2}
We consider an intersection of a hypersurface $A$ defined by a convex
polynomial $g$ and a subspace $B$:
\begin{align*}
 A & = \{(x,z) \in \R^n\times \R:z \geq g(x)\},\\
 B & = \{(x,0)\in \R^n\times \R: x\in B_0\},
\end{align*}
where $B_0$ is an $r$-dimensional  subspace of $\R^n$, where $1\leq r \leq n-1$.
We assume that $g(x)>0\ (x\neq0)$ and $g(0) = 0$.

By rotation about $z$-axis, we may assume
\[
 B = \{(x,y,0)\in \R^{n-r}\times \R^r\times \R:x = 0\}.
\]
We consider the sequences $\{a(k)\},\ \{b(k)\}$ constructed by the alternating projections as
\begin{equation}
 a(k) \overset{P_B}{\longmapsto} b(k) \overset{P_A}{\longmapsto} a(k+1).
\label{eq:AP2}
\end{equation}
The following lemma is an easy consequence of Lemma 3.4 of
\cite{TB2014}.
  \begin{lemma2}
   \label{lemma:loja}
  Let $f(x)=\sum_{\alpha}f_\alpha x^\alpha \in \R\{x\}$
  and 
  $f_\Gamma(x) = \sum\{f_\alpha x^\alpha:\alpha \in\bigcup\Gamma(f)\cap \supp f\}$.  
 If $f$ is nonnegative and nondegenerate,
 then $\cL(f) = \cL(f_\Gamma)$.
  \end{lemma2}
\begin{thm}
\label{thm:loja}
 Suppose $g(0,y)^2$ is nondegenerate and
 $d = \cL(g(0,\cdot))$.
 Then
   $b(k)$ defined by \eqref{eq:AP2} converges to $0$ in the rate $O(k^{\frac{-1}{2d-2}})$.
\end{thm}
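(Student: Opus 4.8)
The plan is to reproduce the Lyapunov-type argument behind Theorem~\ref{thm:hshp}, with one extra twist: because $B$ is now a proper subspace, the one-step decrease of $\|b(k)\|$ has to be read off from the \emph{restriction} $g(0,\cdot)$ rather than from $g$ itself, and bridging the two costs a short perturbation estimate on the ``horizontal'' coordinate of the projection $P_A$.

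First I would fix coordinates: write $\R^n=\R^{n-r}\times\R^r$ with variables $(x,y)$ so that $B=\{(0,y,0)\}$, and write $a(k)=(x(k),y(k),z(k))$; then $b(k)=(0,y(k),0)$, $\|b(k)\|=\|y(k)\|$, and $d(a(k),B)^2=\|x(k)\|^2+z(k)^2$. Since $g$ is a nonnegative convex polynomial with $g(x)>0$ for $x\ne0$ and $g(0)=0$, the restriction $g(0,\cdot)$ is a nonnegative polynomial, positive away from the origin and vanishing there; positivity along each coordinate axis forces a pure power of each $y_j$ to occur in $g(0,\cdot)$, so $g(0,\cdot)$ is convenient, while nondegeneracy of $g(0,\cdot)^2$ implies nondegeneracy of $g(0,\cdot)$. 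Hence, by Lemma~\ref{lemma:loja} and the Newton-diagram formula for the \L{}ojasiewicz exponent of a convenient nondegenerate polynomial, $d=\cL(g(0,\cdot))$ is a positive integer; it is at least $2$ since $g(0,y)\le M\|y\|^2$ near $0$, and $g(0,y)\ge c_1\|y\|^d$ near $0$ for some $c_1>0$.

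Next I would extract the recursion. Applying Lemma~\ref{lemma:convex_ineq} and expanding $\|a(k+1)\|^2=\|x(k+1)\|^2+\|y(k+1)\|^2+z(k+1)^2$, the inequality $\|a(k+1)\|^2+d(a(k),B)^2\le\|a(k)\|^2$ reduces, after cancelling $\|x(k)\|^2+z(k)^2$ and discarding $\|x(k+1)\|^2\ge0$, to
\[
\|b(k+1)\|^2+z(k+1)^2\le\|b(k)\|^2 .
\]
Since $a(k+1)=P_A(0,y(k),0)$, Lemma~\ref{lemma:APsystem} gives $z(k+1)=g(x(k+1),y(k+1))$ together with $x(k+1)_i=-g_{x_i}(x(k+1),y(k+1))\,g(x(k+1),y(k+1))$. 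Because $\nabla g(0)=0$ one has $\|\nabla_x g(p)\|\le M\|p\|$ near the origin, and $a(k)\to0$ by the known convergence of alternating projections for convex sets; so for all large $k$ these facts give first $\|x(k+1)\|\le\|y(k+1)\|$, then $\|x(k+1)\|\le 2M\|y(k+1)\|\,g(x(k+1),y(k+1))$, and hence, via the mean value theorem, $|g(x(k+1),y(k+1))-g(0,y(k+1))|\le 4M^2\|y(k+1)\|^2\,g(x(k+1),y(k+1))$. For $k$ large the coefficient is below $\tfrac12$, so $z(k+1)=g(x(k+1),y(k+1))\ge\tfrac23 g(0,y(k+1))\ge\tfrac{2c_1}{3}\|y(k+1)\|^d=\tfrac{2c_1}{3}\|b(k+1)\|^d$.

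Combining these, for all large $k$ we obtain $\|b(k+1)\|^2\bigl(1+(2c_1/3)^2\|b(k+1)\|^{2(d-1)}\bigr)\le\|b(k)\|^2$; setting $w_k=\|b(k)\|^2\ge0$ (with $w_k\to0$), this is precisely the hypothesis of Corollary~\ref{cor:basic} with $q=d-1\in\N$, $C=(2c_1/3)^2$, and $h\equiv0$, whence $w_k=O(k^{-1/(d-1)})$ and therefore $\|b(k)\|=O(k^{-1/(2d-2)})$, as claimed. The main obstacle is the perturbation step: one must verify that the horizontal part $x(k+1)$ of $P_A(b(k))$ is not merely small but of order $\|y(k+1)\|\,g(x(k+1),y(k+1))$ — small \emph{relative to} $\|y(k+1)\|$ — so that the \L{}ojasiewicz lower bound for $g(0,\cdot)$ is inherited, up to a constant factor, by $g(x(k+1),y(k+1))$; the remaining manipulations are a transcription of the proof of Theorem~\ref{thm:hshp}.
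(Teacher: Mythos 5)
Your proof is correct, and it takes a genuinely different route from the paper's. The paper establishes the key one-step estimate through ideal-theoretic machinery: using the implicit function theorem it writes $x(k+1)=\phi(y(k+1))$ for a power series $\phi$, applies Lemma~\ref{lemma:newton} (via the Nakayama-type Lemma~\ref{lemma:nakayama}) to show $\phi_i\in\fm\fa$, deduces that $g(\phi(y),y)^2+\|\phi(y)\|^2$ and $g(0,y)^2$ have the same Newton boundary, and then invokes Lemma~\ref{lemma:loja} to transfer the \L{}ojasiewicz exponent. You bypass all of this with a short quantitative perturbation argument: from the optimality system $x_i=-g_{x_i}g$ and $\nabla g(0)=0$ you get $\|x(k+1)\|\le 2M\|y(k+1)\|\,g(x(k+1),y(k+1))$, and a mean-value estimate then yields $g(x(k+1),y(k+1))\ge\tfrac{2}{3}g(0,y(k+1))\ge\tfrac{2c_1}{3}\|b(k+1)\|^d$ for all large $k$, which plugs directly into Lemma~\ref{lemma:convex_ineq} and Corollary~\ref{cor:basic}. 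Your argument is more elementary and, for this particular statement, arguably shorter; the paper's formal-power-series route gives finer structural information about the Newton boundary of the composed expression (which is also what it uses to control the $\|\phi(y)\|^2$ term, a term you simply discard as nonnegative). Both approaches share the need for $d\in\N$ so that Corollary~\ref{cor:basic} applies with $q=d-1$, and your justification for this (convenience of $g(0,\cdot)$ from positivity along the axes, nondegeneracy of $g(0,\cdot)$ inherited from that of $g(0,\cdot)^2$ since $\nabla\bigl((f_\Delta)^2\bigr)=2f_\Delta\nabla f_\Delta$, then the Newton-diagram formula) is sound.
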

 \begin{proof}
  We write 
  \[
  a(k) = (x(k),y(k),z(k))= (x_1(k),\ldots,x_{n-r}(k),y_1(k),\ldots,y_r(k),z(k)).
  \]
  Then $b(k) =
  (0,y(k),0)$, and Lemma \ref{lemma:APsystem} gives 
   \begin{align}
&   x_i(k+1) + g_{x_i}(x(k+1),y(k+1))
   g(x(k+1),y(k+1)) = 0,\ i = 1,\ldots, n-r, \label{eq:subsp1}   \\
&   y_j(k+1) + g_{y_j}(x(k+1),y(k+1))
   g(x(k+1),y(k+1)) = y_j(k),\ j = 1,\ldots, r.
   \notag
   \end{align}
  Since $d(a(k),B)^2 = \|x(k)\|^2 + z(k)^2$ and $\|b(k)\|=\|y(k)\|$,
  Lemma \ref{lemma:convex_ineq} implies
  \[
  \|(x(k+1),y(k+1),z(k+1))\|^2 + \|x(k)\|^2 + z(k)^2 \leq \|(x(k),y(k),z(k))\|^2.
  \]
  In addition, since $z(k+1)=g(x(k+1),y(k+1))$, we obtain
  \[
  \|x(k+1)\|^2 + \|b(k+1)\|^2 + g(x(k+1),y(k+1))^2 \leq \|b(k)\|^2.
  \]
  Here, we consider the system
\[
 x_{i} + g_{x_i}(x,y)
   g(x,y) = 0,\ i = 1,\ldots, n-r.
\]  
  By implicit function theorem,
  there exist convergent power series $\phi_i(y)$
  which solve equation $(\ref{eq:subsp1})$ as $x_i = \phi_i(y)$ and
  $\phi_i(0) = 0$ for $i
  = 1, \ldots, n-r$.
  We will apply Lemma \ref{lemma:newton}.
We claim that the Newton boundary of $g(0,y)$ meets all the axes.
In fact, if there exists $j$ such that the $j$th axis has no exponent of the
  support of $g(0,y)$, then $g(0,\ldots,0,y_j,0,\ldots,0) = 0$.
  It contradicts to $g(0,y)>0$ for $y\neq 0$.
  Since $g_{x_i}(0,0) = 0$, Lemma \ref{lemma:newton} implies that
  $I \subset \fm\fa$, where
  $I=\langle \phi_1(y),\ldots, \phi_{n-r}(y)\rangle$,
  $\fm = \langle y_1,\dots,y_r\rangle$, and $\fa = \langle
  y^\alpha:\alpha \in \supp(g(0,y))\rangle$.  

  Let $\phi(y) = (\phi_1(y),\ldots,\phi_{n-r}(y))$.
  Then we have
  \[
   \|b(k+1)\|^2 + g(\phi(y(k+1)),y(k+1))^2  + \|\phi(y(k+1))\|^2 \leq \|b(k)\|^2.
  \]
  Here, there exist polynomials $p_i$ such that
\begin{multline*}
    g(\phi(y),y)^2  = \left(g(0,y) +
 \sum\nolimits_{i=1}^{n-r}\phi_i(y)p_i(\phi(y),y)\right)^2\\
  = g(0,y)^2 +
 2g(0,y)\left(\sum\nolimits_{i=1}^{n-r}\phi_i(y)p_i(\phi(y),y)\right)+
 \left(\sum\nolimits_{i=1}^{n-r}\phi_i(y)p_i(\phi(y),y)\right)^2
\end{multline*}
  Since $I \subset \fm\fa$,
  we have $g(\phi(y),y)^2 - g(0,y)^2 \in \fa\fm\fa + \fm^2\fa^2 \subset  \fm\fa^2$.
  Thus the Newton boundary of $g(\phi(y),y)^2 + \|\phi(y)\|^2$
  is equal to that of $g(0,y)^2$.  
  Since $g(0,y)^2$ is nondegenerate, Lemma \ref{lemma:loja}
  implies that $g(\phi(y),y)^2 +
  \|\phi(y)\|^2$ and $g(0,y)^2$ have the same \L{}ojasiewicz exponent.
  Thus we obtain
  \[
   \|b(k+1)\|^2 + C\|b(k+1)\|^{2d} \leq \|b(k)\|^2
  \]
  for some $C>0$. By Corollary \ref{cor:basic}, we have $\|b(k)\| = O\left(k^{\frac{-1}{2d-2}}\right)$.
 \end{proof}


\begin{exmpl}
 Let $g=x_1^6 + x_2^4 + x_3^2$ as in Example \ref{ex:loja}, $A = \{(x,z)\in \R^3\times \R:z \geq g(x)\}$ and $B = \{(x,z)\in \R^3\times \R: x_1 = x_2 = 0\}$.
 Then $\cL(g(0,0,x_3))=2$ while $\cL(g) = 6$.
 Theorem \ref{thm:loja} implies that $b(k)$ defined by \eqref{eq:AP2} converges to $0$ in the rate $O(k^{\frac{-1}{2}})$.
\end{exmpl}

\section{Acknowledgements}
The first author was supported by JSPS KAKENHI Grant Number JP17K18726. The second author was supported by JSPS KAKENHI Grant Number JP19K03631. The third author was supported by JSPS KAKENHI Grant Number JP20K11696 and ERATO HASUO Metamathematics for Systems Design Project (No.JPMJER1603), JST.

\end{document}